\newtheorem{theorem}{Theorem}
\newtheorem{proposition}{Proposition}
\newtheorem{corollary}{Corollary}
\newtheorem{lemma}{Lemma}
\newtheorem{definition}{Definition}
\theoremstyle{remark}
\newcommand{\Ad}{\text{\rm Ad}}
\newcommand{\St}{\text{\rm St}}
\newcommand{\Gr}{\text{\rm Gr}}
\newcommand{\Id}{\text{\rm Id}}
\newcommand{\ad}{\text{\rm ad}}
\newcommand{\SO}{\text{\rm SO}}
\newcommand{\SU}{\text{\rm SU}}
\newcommand{\Sp}{\text{\rm Sp}}
\newcommand{\Tr}{\text{\rm Tr}}
\newcommand{\Kl}{\text{\rm Kl}}
\newcommand{\Orth}{\text{\rm O}}
\newcommand{\U}{\text{\rm U}}
\newcommand{\fg}{\mathfrak {g}}
\newcommand{\fk}{\mathfrak {k}}
\newcommand{\fp}{\mathfrak {p}}
\newcommand{\fso}{\mathfrak{so}}
\newcommand{\fu}{\mathfrak{u}}
\newcommand{\fsp}{\mathfrak{sp}}
\newcommand{\fgl}{\mathfrak{gl}}
\newcommand\cV{\mathcal{V}}
\newcommand\cH{\mathcal{H}}
\newcommand\ep{{\epsilon}}
\newcommand\ii{{\mathbf i}}
\newcommand\jj{{\mathbf j}}
\newcommand\kk{{\mathbf k}}
\newcommand\IC{{\mathbb C}}
\newcommand\IH{{\mathbb H}}
\newcommand\IR{{\mathbb R}}
\definecolor{amethyst}{rgb}{0.6, 0.4, 0.8}
\begin{document}

\title{Extremal curves on Stiefel and Grassmann manifolds}


\author{V. Jurdjevic}
\address{ Department of Mathematics, University of Toronto, 
\newline Toronto, Ontario  M5S 3G3, Canada}
\email{jurdj@math.utoronto.ca}

\author{I. Markina}
\address{Department of Mathematics, University of Bergen, \newline P.O.~Box 7803,
Bergen N-5020, Norway}
\email{irina.markina@uib.no}
\thanks{This work was partially supported by ISP project 239033/F20 of Norwegian Research Council and the work of the second author was also partially supported by joint BFS-TSF mathematics program.}

\author{F. Silva Leite}
\address{Department of Mathematics, University of Coimbra, \newline Apartado 3008, 3001-501 Coimbra, Portugal and Institute of Systems and Robotics, Uni\-versity of Coimbra - P\'olo II, 3030-290 Coimbra, Portugal}
\email{fleite@mat.uc.pt}


\subjclass[2010]{Primary 53C17, 53C22, 53B21, 53C25, 30C80; Secondary 49J15, 58E40.} 
\keywords{Sub-Riemannian geometry, Riemannian and sub-Riemannian geodesics, quasi-geodesic curves, horizontal distributions, Grassmann manifolds, Stiefel manifolds, Lie groups actions on manifolds, Pontryagin Maximum Principle, extremal curves.}
\date{}


\begin{abstract}
This paper  uncovers a large class of left-invariant sub-Rie\-mannian  systems on Lie groups that admit explicit solutions  with certain  properties, and provides geometric origins for  a class of important curves  on Stiefel manifolds, called quasi-geodesics, that project on Grassmann manifolds as Riemannian geodesics. We show that quasi-geodesics are the projections of sub-Riemannian geodesics   generated  by certain  left-invariant distributions  on  Lie groups that act transitively on each Stiefel manifold $\St_k^n(V)$. This result is valid not only for the real Stiefel manifolds in   $V=\IR^n$, but also  for the Stiefels in the Hermitian space $V=\IC^n$ and  the quaternion space $V=\IH^n$.
\end{abstract}

\maketitle


\bibliographystyle{amsplain}

\section{Introduction}

This paper provides geometric origins for  a class of curves  on Stiefel manifolds, called quasi-geodesic,
  that have proved to be particularly important  in solving interpolation problems arising in real applications~\cite{KL}.  We show that quasi-geodesic curves are the projections of sub-Riemannian geodesics   generated  by certain  left-invariant distributions  on  Lie groups $G$ that act on Stiefel manifolds.  
  This quest for  the geometric characterization of quasi-geodesic curves  uncovered a large class of left-invariant sub-Riemannian  systems on Lie groups that admit explicit solutions, in the form that will be made clear  below.   
As a result, the paper is as much about sub-Riemannian structures on Lie groups as it is about quasi-geodesic curves.
  
  The first part of the paper  deals  with sub-Riemannian structures  associated  with homogeneous spaces $M= G/K$  induced by a transitive left action of a  semi-simple Lie group $G$ on a smooth manifold $M$, where $K$ denotes the isotropy subgroup relative to a fixed point~$m_0 \in M$.    
  The sub-Riemannian structures will be defined by a left-invariant distribution $\mathcal H$ generated by a vector space $\fp\subset\fg$ that is  transversal to the Lie algebra $\fk$ of the isotropy group $K$, and   satisfies additional Lie algebraic relations:
  \begin{equation*}
  \fg=\fp\oplus\fk,\quad [\fp,\fk]\subseteq\fp,\quad  \fk\subseteq{[\fp,\fp}].
 \end{equation*} 

The sub-Riemannian metrics  are defined by   bilinear,  symmetric, non-degenerate, $\Ad_G$ invariant forms $\langle . \, , .\rangle$  that are positive definite on $\fp$.  Under additional assumption that $\fp^\perp$, the orthogonal complement relative to the Killing form, is a Lie subalgebra of  $\fg$, we show that the sub-Riemannian geodesics are of the form 
\begin{equation*}
 g(t)=g_0\, e^{t(P+Q)}e^{-tQ},
 \end{equation*}
where $P\in \fp$ and $Q\in\fp^\perp$, see Theorem~\ref{prop:4.3}.
 
In these situations, a Riemannian metric on $M$ is induced by the push forward of the sub-Riemannian metric on $G$. 
 We show that the corresponding Riemannian geodesics on $M$ are the projections of curves in (1)  where $P$ and $Q$ satisfy the additional relation that $P+Q$ is in $\fk^\perp $.  The above findings coincide with the  analogous results on semi-simple Lie groups with an involutive automorphism, as in the theory of symmetric Riemannian spaces, where $\fp^\perp=\fk$, see, for instance,~\cite{Jc2}.

The second part of the paper deals with the application of these results to  the actions of Lie groups on  Stiefel and Grassmann manifolds and their relevance to the quasi-geodesic curves. Rather than dealing exclusively with the real case, we also include   Stiefel and Grassmann manifolds over complex and quaternion algebras.

We realize each Stiefel manifold in two ways as homogeneous manifolds  $G/K$. On  groups $G$ that define  the homogeneous structure we consider three distinct sub-Riemann\-ian structures whose sub-Riemann\-ian geodesics  are described by Theorem 1.  We then single out the sub-classes of sub-Riemannian geodesics that project either to  Riemannian geodesics or to quasi-geodesic curves on the Stiefel manifolds. Additionally, we show that quasi-geodesics are curves of constant curvature relative to the induced Riemannian metric.
 In the process we discovered an interesting fact that  two distinct sub-Riemannian structures  on $G$ can induce isometric Riemannian structures on the homogeneous manifold $G/K$, see Theorem~\ref{th:isom}. 

 We end the paper with a brief discussion of Lagrangian manifolds. First, we show the relation of Theorem 1 to the canonical (symmetric) Riemannian metric,  and secondly, we show that the projections of curves on Stiefel manifolds, descending from the sub-Riemannian curves on $G$, have constant geodesic curvature on the Grassmann manifold relative to its canonical metric. In particular, we show that  the  quasi-geodesic curves on the Stiefel manifolds project onto the Riemannian geodesics on the Grassmannian manifold. 

This article  constitutes a full version of the results announced  in a short paper  \cite{JKL} on Stiefel manifolds  embedded in the Euclidean space $\IR^n$.

\section{Group actions  and sub-Riemannian problems}

\subsection{ Notations and background material} 
We begin  with the notations and basic concepts  that will be necessary throughout  the paper. Details about this background material may be found, for instance, in~\cite{Hl,KMS93,St,Warner}.



In particular, $M$ will denote a smooth (or real analytic) manifold, $TM$ and $T^*M$ denote the tangent and the cotangent bundle of $M$ respectively, and $T_mM$ and $T^*_mM$ denote the tangent and cotangent spaces at $m\in M$. 
If $F$ is a smooth map between smooth manifolds, then  $F_*$ will denote the  tangent map and $F^*$  the dual map.

The set of smooth vector fields on $M$ is denoted by $\Gamma(TM)$. A smooth curve $m(t)$ defined on an open interval $(-\ep,\ep)$ in $M$ is said to be an integral curve, or a solution curve, of $X$  if $\frac{dm}{dt}=X(m(t))$  for all $t\in (-\ep,\ep)$.   A vector field  $X$ is said to be complete if each integral curve of $X$  can be extended to the interval $(-\infty,\infty)$.
 We denote by $\phi_t^X(m)$ the flow on $M$ generated by a complete vector field $X\in\Gamma(TM)$.  Sometimes we will regard a flow as a one parameter group of diffeomorphisms $\{\phi^X_t:\ t\in \mathbb R\}$.  We will make use of the fact that any one parameter group of diffeomorphisms $\phi_t$ on $M$ is generated by the flow of a complete vector field $X$, in the sense that $\phi_t=\phi^X_t$.  In this context,  $X$ is called the infinitesimal generator  of  the group of diffeomorphisms.

Throughout the paper $G$ will denote a Lie group, and $\fg$ will denote its Lie algebra. We think of $\fg$ as the tangent space $T_eG$ at the identity $e\in G$, with the Lie bracket induced by the left-invariant vector fields in $G$:  $[A,B]=[X,Y](e)=(YX-XY)(e)$, where $X(g)=(L_g)_*(A)$ and $Y(g)=(L_g)_*(B)$, $A,B\in\fg$, and $L_g$ is the left translation by $g$ in $G$. We will adopt a short-hand notation and write $gA$ ($Ag$) for the left (right) invariant vector fields.

 If $X(g)=gA$ is a left ($Ag$  right) invariant vector field defined by $A\in\fg$, then its flow $\phi_t^{X}(g)$ is  the left  (right) translate $L_g\phi_t^X(e)$ ($R_g\phi_t^X(e)$) of the flow through the group identity. We will write  $\exp(tA)$, as well as  $e^{tA}$ when convenient, for the curve $\phi_t^X(e)$ with $X(g)=gA$. 




A Lie group $G$ is said to act on a manifold $M$ through the left action $\phi\colon G\times M\rightarrow M$ if $\phi$ satisfies
\begin{equation*}\label{action}
\phi(g,\phi(h,m))=\phi(L_g(h),m)=\phi(gh,m)\quad\text{and}\quad\phi(e,m)=m,
\end{equation*}
for all $g,h$ in $G$, and all $m$ in $M$. We use $\phi_g$ to denote the diffeomorphism $m\mapsto \phi_g(m)$ on $M$.  If $m_0$ is a point in $M$, then  $K_{m_0}=\{g\in G:\ \phi(g,m_0)=m_0\}$ denotes the isotropy group of $m_0$. An isotropy group is a closed subgroup of $G$ and any two isotropy groups are conjugate. If a group $G$ acts transitively on $M$, then $M$ can be regarded as the quotient $G/K_{m_0}$ with $m\in M$ identified with $gK_{m_0}$ whenever $\phi_g(m_0)=m$. Then the natural projection $\pi\colon G\rightarrow G/K_{m_0}$ is given by the map $g\rightarrow \phi_g(m_0)$. 

Each element $A\in\fg$ induces a one-parameter group of diffeomorphisms $\{\phi_{e^{tA}}:\ t\in \IR\}$ on $M$. We will use $X_A$ to denote its infinitesimal generator, so that  $\phi_{e^{tA}}(m)=\phi_{t}^{X_A}(m)$.  We will refer to  $\mathcal F=\{X_A\colon\ A\in\fg\}$ as the family of vector fields on $M$ subordinated to the group  action.  
  Since $G$  acts by the left action on $M$,    $\mathcal F$ is a homomorphic image of the family of right-invariant vector fields on $G$. Therefore, $\mathcal F$ is a finite  dimensional Lie algebra of complete vector fields on $M$. 
  
  If $G$ is connected, then any  $g\in G$ can be written as $g=e^{t_kA_k}\cdots e^{t_1A_1}$ for some elements $A_1,\dots,A_k $ in $\fg$. Then,
  \begin{equation}\label{orbit}
  \phi_g(m)=\phi_{e^{t_kA_k}\cdots e^{t_1A_1}}(m)=\phi^{X_{A_k}}_{t_k}\cdots\phi_{t_1}^{X_{A_1}}(m).
  \end{equation}
  This implies that $M$ is equal to the orbit of $\mathcal F$ through any point $m\in M$  whenever  $G$ is connected, and the action of $G$ on $M$   is transitive. In such a case  any absolutely continuous  curve $m(t)$ in $M$  is a solution of 
\begin{equation*}\label{eq:2}
\frac{dm}{dt}=\sum_{i=1}^k u_i(t)X_{A_i}(m)
\end{equation*}
for some choice of elements $A_1,\dots,A_k$ in $\fg$ and some choice of measurable and bounded functions $u_1(t),\ldots,u_k(t)$.


\subsection{Sub-Riemannian problems on Lie groups}\label{sRproblem}


In what follows we will always suppose that $G$ is a connected Lie group that  acts transitively on $M$ from the left,  and $K$ will always  denote the isotropy subgroup of a fixed  point $m_0$ in $M$. Then  $\fk$  will denote the Lie algebra of $K$. 

We will now assume that $\fp$ is a linear subspace in $\fg$ such that 
\begin{equation}\label{eq:3}
\fg=\fp\oplus \fk,\quad [\fp,\fk]\subseteq\fp, \quad \fk\subseteq[\fp,\fp].
\end{equation}  

 The space  $\fp$ induces a family  $\mathcal H_\fp$ of left-invariant vector fields  $X_A(g)=gA, g\in G,A\in\fp$. The associated distribution $\mathcal H_\fp(g)$, $g\in G$,
will be called horizontal, or the Ehresmann $\fp$ connection, see for instance~\cite{KMS93}. Left-invariant vector fields $gB$, $B\in\fk$ will be called vertical. Then   $\cV$  will denote the family of vertical vector fields. 

Absolutely continuous curves $g(t)\in G$ that satisfy $\frac{dg}{dt}=\dot g(t)\in\cH_\fp(g(t))$ for almost all $t$ in some interval $[0,T]$ are called {\it horizontal}. Alternatively, horizontal curves can be described as the solution curves of a control system
\begin{equation*}
\frac{dg}{dt}=\sum_{i=1}^k u_i(t)gA_i,
\end{equation*}
where $A_1,\dots,A_k$ is a basis for $\fp$, and $u_1(t),\dots,u_k(t)$ are bounded and measurable functions
on $[0,T]$.

Conditions~\eqref{eq:3} imply that $\fp+[\fp,\fp]=\fg$, which in turn implies that 
$\mathcal H_\fp$ is a two step bracket generating distribution, in the sense that  $$\mathcal H_\fp(g)+[\mathcal H_\fp,\mathcal H_\fp](g)=T_gG,\quad g\in G.$$ Therefore,   any two points in $G$ can be joined by a horizontal curve whenever $G$ is connected~\cite{AS,Chow, Rashevsky}. 

  A  curve $g(t)$ in $G$  is called  {\it a horizontal lift of a curve} $m(t)\in M=G/K$ if $g(t)$ is a horizontal curve that projects onto $m(t)$, that is, $\pi(g(t))=\phi_{g(t)}(m_0)=m(t)$. 
  \begin{proposition}
Every absolutely continuous curve $m(t)$ in $M$ is the projection of a horizontal curve. Moreover, if $g_1(t)$ and $g_2(t)$ are  horizontal lifts of a curve $m(t)$ in $M$, then $g_1(t)=g_2(t)h$ for some constant element $h\in K$. 
\end{proposition}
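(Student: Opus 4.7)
The key geometric observation for the plan is that the decomposition $\fg = \fp \oplus \fk$ translates by left multiplication to a pointwise splitting $T_gG = \mathcal{H}_\fp(g) \oplus V(g)$, where $V(g) = \{gB : B \in \fk\}$ is the kernel of $\pi_{*,g}$, and $\pi_{*,g}$ restricted to $\mathcal{H}_\fp(g)$ is therefore a linear isomorphism onto $T_{\pi(g)}M$. Because $K$ is closed, $\pi : G \to G/K$ is a principal $K$-bundle, so any absolutely continuous $m : [0,T] \to M$ admits an absolutely continuous lift $\tilde g : [0,T] \to G$ by piecing together preimages under local sections on trivializing neighbourhoods. My plan for existence is then to modify $\tilde g$ on the right by an AC path in $K$ chosen so as to annihilate the vertical component.

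To make this explicit, I would write $\tilde g(t)^{-1} \dot{\tilde g}(t) = P(t) + Q(t)$ with $P(t) \in \fp$ and $Q(t) \in \fk$, both measurable and locally bounded, and then seek $g(t) = \tilde g(t) k(t)$ with $k(t) \in K$. A direct computation gives
\begin{equation*}
g^{-1} \dot g \;=\; \Ad_{k^{-1}}(P) + \Ad_{k^{-1}}(Q) + k^{-1} \dot k.
\end{equation*}
Now $\Ad_K\fp \subseteq \fp$ follows from $[\fk,\fp]\subseteq \fp$, so $\Ad_{k^{-1}}P \in \fp$, while $\Ad_{k^{-1}}Q$ and $k^{-1}\dot k$ lie in $\fk$. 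Thus $g^{-1}\dot g \in \fp$ is equivalent to the left-trivialized ODE $k^{-1}\dot k = -\Ad_{k^{-1}}Q(t)$ on $K$, i.e.\ $\dot k = -Q(t)\,k$; its driver is $\fk$-valued, so Carathéodory's theorem produces a unique AC solution $k(t) \in K$ with $k(0) = e$, and $g = \tilde g k$ is the required horizontal lift of $m$.

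For uniqueness, I would take two horizontal lifts $g_1, g_2$ of the same $m$, so that $h(t) := g_1(t)^{-1} g_2(t) \in K$ for all $t$. Writing $\dot g_i = g_i A_i$ with $A_i(t) \in \fp$ and differentiating $g_2 = g_1 h$ yields
\begin{equation*}
h^{-1} \dot h \;=\; A_2(t) - \Ad_{h(t)^{-1}} A_1(t),
\end{equation*}
which lies in $\fp$ by $\Ad_K\fp \subseteq \fp$; but $h(t) \in K$ also forces $h^{-1}\dot h \in \fk$, so the direct-sum relation $\fp \cap \fk = \{0\}$ gives $\dot h \equiv 0$. Therefore $h$ is the constant element $g_1(0)^{-1} g_2(0) \in K$, and after renaming this constant we obtain $g_1(t) = g_2(t) h$. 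The only real obstacle is ensuring that the auxiliary ODE on $K$ actually has solutions that remain in $K$ in the AC category; once one observes that the driver is $\fk$-valued this is immediate from Carathéodory applied inside $K$, and everything else is bookkeeping on the algebraic splitting $\fg = \fp \oplus \fk$ together with the $\Ad_K$-invariance of $\fp$.
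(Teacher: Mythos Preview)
Your proof is correct and follows essentially the same route as the paper: lift $m(t)$ to an arbitrary absolutely continuous curve in $G$, right-multiply by a curve $k(t)\in K$ solving $\dot k=-Q(t)k$ to kill the vertical component, and use $\Ad_K\fp\subseteq\fp$ (from $[\fk,\fp]\subseteq\fp$) both to verify horizontality and, in the uniqueness step, to force $h^{-1}\dot h\in\fp\cap\fk=\{0\}$. The only cosmetic differences are that the paper obtains the initial lift from the orbit description rather than local sections, and simply asserts the existence of the $K$-valued solution where you invoke Carath\'eodory explicitly.
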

\begin{proof}

 Equation ~\eqref{orbit} implies that every absolutely continuous curve $m(t)$ in $M$  is the projection  of an absolutely continuous  curve $g(t)\in G$. Then   $\dot g(t)=g(t)U(t)$ for some curve $U(t)\in\fg$, and $U(t)=P(t)+Q(t)$, with $P(t)\in\fp$ and $Q(t)\in\fk$. 

Now define a new curve $\tilde{g}(t)=g(t)h(t)$, where $h(t)$  is a solution in $K$ of $\dot h(t)=-Q(t)h(t)$.  The  curve $\tilde{g}(t)$ also projects on $m(t)$ and furthermore,
 $$\dot{\tilde{g}}(t)=g(t)(P(t)+Q(t))h(t)+g(t)\dot h(t)=\tilde{g}(t)h^{-1}(t)P(t)h(t).$$
 The condition $[\fp,\fk]\subseteq\fp$ implies that $h^{-1}(t)P(t)h(t)$ is in $\fp$ for all $t$, hence $\tilde g(t)$ is a horizontal curve that  projects onto $m(t)$. 
 
 If $g_1(t)$ and $g_2(t)$ are  horizontal lifts of a curve $m(t)$ in $M$, then $\dot g_1=gU_1(t)$ and $\dot g_2=g_2U_2(t)$ for some curves $U_1(t)$ and $U_2(t)$ in $\fp$. Since $g_1(t)$ and $g_2(t)$ project onto the same  curve $m(t)$, $g_1(t)=g_2(t)h(t)$ for some curve $h(t)\in K$.  But then,
 $$g_1U_1(t)=\dot g_1=\dot g_2h+g_2\dot h=g_1(h^{-1}U_2h+h^{-1}\dot h).$$
 Hence, $U_1=h^{-1}U_2h+h^{-1}\dot h$. Since $h^{-1}U_2h$  is in $\fp$, $ h^{-1}\dot h=0$.  Therefore $h$ is constant.
 \end{proof} 
 Assume now that  $\langle.\,,.\rangle_{\fp}$  is any positive definite $\Ad_K$-invariant symmetric bilinear form on $\fp$.  This bilinear form induces a left-invariant  inner product  $\langle gA,gB\rangle_g=\langle A,B\rangle_\fp$, on $\mathcal H_\fp(g)$.   We define  $\|\frac{dg}{dt}\|_{\fp}=\sqrt{\langle g^{-1}(t)\frac{dg}{dt},  g^{-1}(t)\frac{dg}{dt}\rangle}_\fp$ for any horizontal curve $g(t)$. This metric is called a (left-invariant) {\it sub-Riemannian metric} relative to the distribution $\cH_\fp$. We denote it by the same symbol $\langle.\,,.\rangle_{\fp}$. The pair $(\cH_\fp,\langle.\,,.\rangle_{\fp})$ is called a (left-invariant) {\it sub-Riemannian structure} on $G$ and the triplet $(G,\cH_\fp,\langle.\,,.\rangle_{\fp})$ is called a {\it sub-Riemannian manifold}, see for instance~\cite{BosRos,BalTysWar}.
The sub-Riemannian metric $\langle .\,,.\rangle_{\fp}$ induces  a length $l(g,T)$ of a horizontal curve $g\colon [0,T]\to G$, by
$$
l(g,T)=\int_0^T\langle\dot g(t),\dot g(t)\rangle_{\fp}^{1/2}\,dt=\int_0^T\langle g^{-1}(t)\dot g(t),g^{-1}(t)\dot g(t)\rangle_\fp^{1/2}\,dt.
$$
The sub-Riemannian distance function $d_{sR}$ on $G$ is defined by
$$
d_{sR}(g_1,g_2)=\inf\Big\{l(g,T):\ \ \dot g(t)\in\cH_\fp(g(t)),\  \ g(0)=g_1,\ \ g(T)=g_2\Big\}.
$$
\begin{definition}\label{def:geod}
A horizontal curve $g \colon [0,T]\to G$ is called geodesic if for any $t\in (0,T)$ there exists $\varepsilon>0$ such that 
$$
d_{sR}(g(t_1),g(t_2))=\int_{t_1}^{t_2}\langle\dot g(t),\dot g(t)\rangle_{\fp}^{1/2}\,dt.
$$
for any $t_1,t_2$ with $t-\varepsilon<t_1<t_2<t+\varepsilon$.
\end{definition}

Any $\Ad_K$ left-invariant sub-Riemannian metric  $\langle .\,,.\rangle_\fp$ induces a Riemannian metric on  $M=G/K$, whereby the length of a curve $m(t)$ on an interval $[0,T]$ is equal to the length of a horizontal lift $g(t)$ of $m(t)$   in $G$. That is, the length of $m(t)$  is given by $l(m,T)=\int_0^T\sqrt{\langle U(t),U(t)\rangle}_\fp\, dt$, where $U(t)=g^{-1}(t)\dot g(t)$.
 If $\tilde g$ is another horizontal lift of $m(t)$ then,  according to the previous proposition, $\tilde g=gh$ for some constant $ h\in K$, and $\tilde U(t)=h^{-1}U(t)h$. But then $\int_0^T\sqrt{\langle U(t),U(t)\rangle}_\fp\,dt=\int_0^T\sqrt{\langle \tilde U(t),\tilde U(t)\rangle}_\fp\,dt$, by $\Ad_K$-invariance of the metric.   Hence the length of $m(t)$ is well defined. 
 \begin{definition} A Riemannian metric on $M=G/K$ that is the push forward of a sub-Riemannian left-invariant metric $\langle.\,,.\rangle_\fp$ on a horizontal distribution $\mathcal H_\fp$ in $G$ will be called {\it{homogeneous}}.
 \end{definition}

In the present paper we are essentially  interested in the structure of sub-Riemannian geodesics on $G$ and their relation to the Riemannian geodesics on $M=G/K$ relative to the homogeneous metric.  Our fundamental results  will be extracted through the length minimizing property of the geodesics and the following auxiliary optimal problem on $G$:
\begin{quotation} 
 given any two points $g_1$ and $g_2$ in $G$, find  a horizontal curve  of shortest length that connects $g_1 $ to $g_2$.  
 \end{quotation}
The solutions to this auxiliary problem are intimately related to the sub-Riemannian geodesics, because any horizontal  curve  of shortest length  that connects $g_1$ to $g_2$ is a sub-Riemannian geodesic.   Conversely,  every sub-Riemannian geodesic $g(t)$ is a curve of shortest length relative   to the points  $g_1=g(0) $ and $g_2=g(t_1)$  for    sufficiently  small $t_1$. 

This formulation permits easy comparison between the sub-Riemannian  geodesics in $G$ and the Riemannian geodesics in $M=G/K$ via the following proposition.   
\begin{proposition}\label{prop:1.2}
Suppose that $m_1$ and $m_2$ are given points in $M$. Let  $S_1=\pi^{-1}(m_1)$ and $S_2=\pi^{-1}(m_2)$ be  the fibers above these points. Then, the projection of a horizontal curve $g(t)$ is a curve of minimal length in $M$ that connects $m_1$ to $m_2$ if and only if $g(t)$ is the curve of minimal length that connects  $S_1$ to $S_2$. 
\end{proposition}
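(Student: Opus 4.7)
The plan is to reduce the statement to the already-established fact that the Riemannian length of any absolutely continuous curve $m(t)$ in $M=G/K$ equals the sub-Riemannian length of each of its horizontal lifts. Combined with the preceding proposition, which guarantees the existence of horizontal lifts and pins down their non-uniqueness to a right translation by a constant element of $K$, the two directions of the equivalence follow by direct comparison of lengths.

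More precisely, I would argue as follows. For the ``$\Rightarrow$'' direction, suppose $g(t)$, $t\in[0,T]$, is a horizontal curve whose projection $m(t)=\pi(g(t))$ is length-minimizing in $M$ between $m_1$ and $m_2$. Let $\tilde g \colon[0,T]\to G$ be any horizontal curve with $\tilde g(0)\in S_1$ and $\tilde g(T)\in S_2$. Its projection $\tilde m(t)=\pi(\tilde g(t))$ is an absolutely continuous curve in $M$ joining $m_1$ to $m_2$, so minimality of $m(t)$ gives
\[
l(g,T)=l(m,T)\le l(\tilde m,T)=l(\tilde g,T),
\]
where the outer equalities are exactly the length equality between a curve in $M$ and any of its horizontal lifts. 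Hence $g(t)$ minimizes length among horizontal curves from $S_1$ to $S_2$.

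For the ``$\Leftarrow$'' direction, suppose $g(t)$ is a horizontal curve of minimal length among all horizontal curves joining $S_1$ to $S_2$, and let $m(t)=\pi(g(t))$. Given any absolutely continuous curve $\tilde m \colon[0,T]\to M$ joining $m_1$ to $m_2$, the previous proposition produces a horizontal lift $\tilde g$ of $\tilde m$; then $\tilde g(0)\in\pi^{-1}(m_1)=S_1$ and $\tilde g(T)\in\pi^{-1}(m_2)=S_2$, so minimality of $g(t)$ between the fibers yields
\[
l(m,T)=l(g,T)\le l(\tilde g,T)=l(\tilde m,T),
\]
proving that $m(t)$ is length-minimizing in $M$.

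I do not expect a serious obstacle: the content of the proposition lies entirely in the length-preservation property of horizontal lifts and in the well-posedness of this length (which uses $\Ad_K$-invariance of $\langle\cdot,\cdot\rangle_\fp$ to ensure that different horizontal lifts of the same curve have the same length). The only point to handle with care is the correct identification of endpoints: one must observe that $\pi(\tilde g(0))=\tilde m(0)=m_1$ forces $\tilde g(0)\in S_1$, and similarly at $T$, which is immediate from the definition $S_i=\pi^{-1}(m_i)$.
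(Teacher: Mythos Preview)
Your argument is correct and is exactly the ``simple'' proof the paper alludes to when it writes that the proof ``will be omitted'': both directions follow immediately from the length equality $l(m,T)=l(g,T)$ between a curve and any horizontal lift, together with the existence of horizontal lifts from the preceding proposition. There is nothing further to compare.
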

The proof  is simple and will be  omitted.


\subsection{The associated optimal control problem}\label{sec:2.2}


The optimal problem of finding curves of shortest length that connect two given points can be easily formulated as a time-optimal control  problem, but since it is more convenient to work with the energy functional $E=\frac{1}{2}\int_0^T\|U(t)\|^2_{\fp}\,dt$ rather than the length functional $\int_0^T\|U(t)\|_{\fp}\,dt$, we will, instead,   pass to  the following energy-optimal control  problem on $G$:
 \begin{quotation}
 if $T>0$ is a fixed number, and if $g_1$ and $g_2$  are fixed  points in $G$, find a horizontal curve $g(t)$ 
 that satisfies $g(0)=g_1$, $g(T)=g_2$,  along which the  total energy $E=\frac{1}{2}\int_0^T\|U(t)\|^2_{\fp}\,dt$, $U(t)=g^{-1}(t)\frac{dg}{dt}(t)$,  is minimal  among all the horizontal curves  that satisfy the boundary conditions $g(0)=g_1$  and $g(T)=g_2.$ \end{quotation}
The following proposition clarifies the relation between these two optimal control problems. 
 \begin{proposition}{\label{prop:2.1}} Every horizontal curve of shortest length is a solution of the optimal control problem for a suitable $T>0$.
 \end{proposition}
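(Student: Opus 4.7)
My plan is to exploit the classical Cauchy--Schwarz inequality relating the length and energy functionals, together with the fact that horizontal curves are stable under reparametrization (since rescaling time only rescales $U(t) = g^{-1}(t)\dot g(t)$, which keeps it in $\fp$ at every point).

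First I would let $g(t)$, $t \in [0,T_0]$, be a horizontal curve of shortest length joining $g_1$ to $g_2$, and reparametrize it by a constant multiple of arclength so that $\|U(t)\|_\fp$ is constant on the new interval $[0,T]$; here $T$ is the suitable fixed number whose existence the proposition asserts. The reparametrized curve is still horizontal, still joins $g_1$ to $g_2$, and still has minimal length $\ell := l(g,T)$ among all horizontal curves from $g_1$ to $g_2$ (defined on any interval, since length is parametrization-independent).

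Next I would invoke Cauchy--Schwarz: for any horizontal curve $\tilde g \colon [0,T] \to G$ with $\tilde g(0)=g_1$, $\tilde g(T)=g_2$, and control $\tilde U(t) = \tilde g^{-1}(t)\dot{\tilde g}(t) \in \fp$, one has
\begin{equation*}
l(\tilde g, T)^2 = \left(\int_0^T \|\tilde U(t)\|_\fp\,dt\right)^2 \le T \int_0^T \|\tilde U(t)\|_\fp^2\,dt = 2T\, E(\tilde g),
\end{equation*}
with equality precisely when $\|\tilde U(t)\|_\fp$ is constant. Applied to the chosen constant-speed $g(t)$, this gives the equality $\ell^2 = 2T\, E(g)$. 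Combining the two inequalities, for any competitor $\tilde g$ defined on $[0,T]$ with the same endpoints,
\begin{equation*}
2T\, E(g) = \ell^2 \le l(\tilde g,T)^2 \le 2T\, E(\tilde g),
\end{equation*}
so $E(g) \le E(\tilde g)$, which is exactly the energy-minimality asserted.

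The only real subtlety is the reparametrization step: one must check that replacing $t$ by a strictly monotone absolutely continuous change of variable $\tau(t)$ keeps the curve in the admissible class (bounded measurable controls with values in $\fp$), and that the length is unchanged while the energy can always be adjusted so that speed is constant. Since $\fp$ is a fixed linear subspace and rescaling by a positive factor preserves $\fp$, this is immediate; the only care needed is that the length-minimizer cannot have rest intervals of positive measure (otherwise one could shorten it), so a constant-speed parametrization exists. This is where the minor technical work lies, but it is routine and does not require any Lie-theoretic input beyond the definition of horizontality.
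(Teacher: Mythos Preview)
Your proposal is correct and follows essentially the same route as the paper's own proof: reparametrize the length-minimizer to constant speed, apply the Cauchy--Schwarz inequality relating length and energy, and deduce energy-minimality from the chain $2T\,E(g)=\ell^2\le l(\tilde g,T)^2\le 2T\,E(\tilde g)$. The paper also remarks (as you do) that the minimizer is regular so that the constant-speed reparametrization exists; your presentation is in fact slightly more streamlined than the paper's, which additionally normalizes to unit speed so that $T$ equals the length.
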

 \begin{proof}
 If $g(t)$ is any horizontal curve that is a solution of $\frac{dg}{dt}=g\,U(t)$ that connects $g_1=g(0)$ to $g_2=g(T)$, then  by the Cauchy inequality 
$$
\int_0^T\|U(t)\|_{\fp}\,dt \leq  T^{\frac{1}{2}}\Big(\int_0^T\|U(t)\|^2_{\fp}\,dt\Big)^{\frac{1}{2}}.
$$ 
The equality occurs only when $\|U(t)\|_{\fp}$ is constant, that is, when $T$ is proportional to the length of $g(t)$ on $[0,T]$.

In particular, if $g(t)$ is a curve of minimal length, and if $\|U(t)\|_{\fp}=1$, then $T$ is the length of $g(t)$ on $[0,T]$, and $E=\frac{1}{2}T$ is the minimal  value of the energy functional relative to the boundary values $g_1$ and $g_2$.
Conversely, suppose that $\hat g(t)$ is a horizontal curve parametrized by arc length: $\|\hat U(t)\|_{\fp}=1$, such that $\hat g(0)=g_1$ and $\hat g(T)=g_2$.  Then, by the above inequality, $E=\frac{1}{2}T$ is the minimal value of the energy functional over all  horizontal curves that satisfy the boundary conditions $g(0)=g_0$ and $g(T)=g_1$, and   $\hat g(t)$ attains this minimal value.  

Suppose now that  $g(s)$ is  a horizontal curve of minimal length $L$ such that  $g(0)=g_1$  and $g(S)=g_2$, for some $S>0$. Then  $L=\int_0^S\|U(\tau)\|_{\fp}\,d\tau$.
It is not difficult to show that $g(s)$ is a regular curve on the interval $[0,S]$, in the sense that $\frac{d g}{ds}(s)\neq 0$, $s\in[0,S]$.
Therefore, $g(s)$ can be reparametrized by  a parameter $s(t)$, $t\in [0,T]$ so that the reparametrized curve $\tilde g(t)=g(s(t))$ has constant speed $\|\tilde g^{-1}\frac{d\tilde g}{dt}\|_{\fp}=\lambda $. In fact, $s(t)$ is the inverse function of $t(s)=\frac{1}{\lambda}\int_0^s\|U(\tau)\|_{\fp}\,d\tau$. 
Since the  property to be horizontal and the length functional are invariant under reparametrizations,  $\tilde g(t)$ is a horizontal curve of minimal length  that reaches $g_2$  from $g_1$ in $T$ units of time, by a control of constant magnitude $\lambda=\frac{L}{T}$. Therefore, $\tilde g(t)$ is a solution of the optimal control  problem on the interval $[0,T]$ relative to the boundary  conditions $g_1$ and $g_2$. Hence it attains the optimal value  $E=\frac{1}{2}T$. It then follows that
\begin{equation*}
L=\lambda T=\int_0^T\|\tilde U(t)\|_{\fp}\,dt=T^{\frac{1}{2}}(2E)^{\frac{1}{2}}=T,
\end{equation*}
and, therefore, $\lambda=1$ and $L=T$.
\end{proof}

\begin{proposition}\label {Existence} Given any pair of points $g_1$ and $g_2$ in $G$, there exists a horizontal curve $g(t)$ such that $g(0)=g_1$, and $g(T)=g_2$, along which the total energy $E=\frac{1}{2}\int_0^T\|U(t)\|^2\,dt$ is minimal.
\end{proposition}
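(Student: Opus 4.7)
The approach is the direct method of the calculus of variations. By the Chow--Rashevsky theorem, which applies since $\mathcal H_\fp$ is two-step bracket generating by the conditions~\eqref{eq:3}, the class $\mathcal A$ of horizontal curves $g\colon [0,T]\to G$ with $g(0)=g_1$ and $g(T)=g_2$ is nonempty, so $e:=\inf\{E(g):g\in\mathcal A\}$ is finite. I would select a minimizing sequence $g_n\in\mathcal A$ with associated controls $U_n(t)=g_n(t)^{-1}\dot g_n(t)\in\fp$. Since $\|U_n\|_{L^2([0,T],\fp)}^2=2E(g_n)$ is uniformly bounded and $\fp$ is finite-dimensional, the Banach--Alaoglu theorem yields, after passing to a subsequence, $U_n\rightharpoonup U^\ast$ weakly in $L^2([0,T],\fp)$.

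Next I would establish uniform convergence of the curves $g_n$ on $[0,T]$. By Cauchy--Schwarz, $d_{sR}(g_n(s),g_n(t))\le C\sqrt{|t-s|}$ with $C$ independent of $n$; extending $\langle\,\cdot\,,\,\cdot\,\rangle_\fp$ to any left-invariant Riemannian metric on $G$ gives a distance $d_G\le d_{sR}$, so $\{g_n\}$ is equi-continuous in the manifold topology. Moreover the curves all lie in a closed sub-Riemannian ball of finite radius around $g_1$, which is compact by the sub-Riemannian Hopf--Rinow theorem applied to the complete left-invariant metric $d_{sR}$. The Arzel\`a--Ascoli theorem then produces a further subsequence converging uniformly to a continuous curve $g^\ast\colon[0,T]\to G$ with $g^\ast(0)=g_1$ and $g^\ast(T)=g_2$.

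Finally I would pass to the limit in the ODE $\dot g_n=g_nU_n$. In a matrix realization of the $g_n(t)$ (available via Ado's theorem, since all trajectories lie in a compact set), the equation becomes the integral equation $g_n(t)=g_1+\int_0^t g_n(s)U_n(s)\,ds$. The combination of uniform convergence $g_n\to g^\ast$ with the weak $L^2$ convergence $U_n\rightharpoonup U^\ast$ yields weak $L^1$ convergence of the products $g_nU_n\rightharpoonup g^\ast U^\ast$, so the limit curve satisfies $g^\ast(t)=g_1+\int_0^t g^\ast(s)U^\ast(s)\,ds$; in particular $g^\ast\in\mathcal A$. Weak lower semicontinuity of the $L^2$ norm then gives $E(g^\ast)\le \liminf_n E(g_n)=e$, so $g^\ast$ attains the infimum. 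The main obstacle is this last step: weak convergence of the controls does not by itself survive the bilinear coupling with $g_n$, and one must exploit the strong (uniform) compactness of the trajectories $g_n$ established in the previous paragraph to justify the limit of the product.
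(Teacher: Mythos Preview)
The paper does not supply its own argument here; it simply refers to \cite[Proposition~9.5, p.~151]{Jc2} and omits the proof. Your direct-method proof---weak $L^2$ compactness of the controls, Arzel\`a--Ascoli together with the sub-Riemannian Hopf--Rinow theorem for uniform compactness of the trajectories, passage to the limit in the integral equation using the strong-weak product argument, and weak lower semicontinuity of the energy---is precisely the standard existence argument for left-invariant sub-Riemannian problems on Lie groups and is correct; it is essentially what the cited reference contains.
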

 The proof is essentially the same as that given  in~\cite[Proposition 9.5, p. 151]{Jc2} and will be omitted.
\begin{corollary} For any two points $g_1$ and $g_2$ in $G$ there exists  a sub-Riemannian geodesic that connects $g_1$ to $g_2$.
\end{corollary}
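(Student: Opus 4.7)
The plan is to deduce this corollary by combining the existence of energy minimizers (the preceding Existence Proposition), the equivalence of energy and length minimization (Proposition~\ref{prop:2.1}), and the fact that a global length minimizer is automatically a local one, which is precisely the defining property of a geodesic in Definition~\ref{def:geod}.

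First, I would fix $T=1$ (any positive $T$ works) and invoke the Existence Proposition to obtain a horizontal curve $g\colon[0,1]\to G$ with $g(0)=g_1$, $g(1)=g_2$, that realizes the minimum of $E=\tfrac12\int_0^1\|U(t)\|_\fp^2\,dt$ among all horizontal curves satisfying these boundary conditions. Next, by the Cauchy inequality argument used in the proof of Proposition~\ref{prop:2.1}, a horizontal curve that minimizes energy with fixed endpoints and fixed interval length must have constant speed $\|U(t)\|_\fp=\lambda$, and among all horizontal curves connecting $g_1$ to $g_2$, the curve $g(t)$ realizes the sub-Riemannian distance, i.e.\ $l(g,1)=d_{sR}(g_1,g_2)$.

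The remaining step is to upgrade this global length-minimizing property to the local minimizing property required by Definition~\ref{def:geod}. For any $t\in(0,1)$ pick an arbitrary $\varepsilon>0$, and let $t_1,t_2$ satisfy $t-\varepsilon<t_1<t_2<t+\varepsilon$. Suppose, for contradiction, that the restriction of $g$ to $[t_1,t_2]$ is not a shortest horizontal curve between $g(t_1)$ and $g(t_2)$; then there exists a horizontal curve $\tilde g$ on $[t_1,t_2]$ with the same endpoints but strictly shorter length. Concatenating $g|_{[0,t_1]}$, $\tilde g$, and $g|_{[t_2,1]}$ produces a horizontal curve (the concatenation is again horizontal and absolutely continuous) joining $g_1$ to $g_2$ with total length strictly less than $l(g,1)=d_{sR}(g_1,g_2)$, a contradiction. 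Hence $d_{sR}(g(t_1),g(t_2))=\int_{t_1}^{t_2}\|U(s)\|_\fp\,ds$, and $g$ is a sub-Riemannian geodesic in the sense of Definition~\ref{def:geod}.

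I do not foresee a serious obstacle here; the only mildly delicate point is making sure the concatenated curve above is genuinely admissible (absolutely continuous and horizontal), which is immediate from the definitions. Everything else is a direct citation of the two previously established propositions.
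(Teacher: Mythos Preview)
Your proposal is correct and is precisely the argument the paper has in mind: the corollary is stated without proof because it is meant to follow immediately from combining the Existence Proposition with Proposition~\ref{prop:2.1} and the remark preceding it that any horizontal curve of shortest length is a sub-Riemannian geodesic. You have simply written out these implicit steps, including the (standard) observation that an energy minimizer on a fixed interval has constant speed and hence minimizes length, and that a global length minimizer is a fortiori a local one in the sense of Definition~\ref{def:geod}.
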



\subsection{The Maximum Principle and the extremal curves}\label{MP} 


We will now turn to  the Maximum Principle of Pontryagin to obtain the necessary conditions of optimality  for  the  above optimal  control problem under the additional assumptions 
that  $G$ is  semi-simple group, and  that $\langle.\,,.\rangle_\fp$ is a positive definite bilinear form  on $\fp$ that is the restriction  of a symmetric, non-degenerate  $\Ad_G$-invariant bilinear  form $\langle.\,,.\rangle$ on $\fg$.  The   $\Ad_G$-invariance  implies  that  
\begin{equation} \label{invariance}
\langle A,[B,C]\rangle=\langle B,[C,A]\rangle,
\end{equation} for all $A,B,C$ in $\fg$.  It also implies that $\langle.\,,.\rangle_\fp$ is $\Ad_K$-invariant. Typically, $\langle.\,,.\rangle$ could be  any scalar multiple of the Killing form $\Kl(A,B)=\Tr(\ad A\circ \ad B)$ that is positive definite on $\fp$.

To make an easier transition to the literature on  control theory,  we will represent curves $U(t)$ in $\fp$   in terms of an orthonormal basis $A_1,\dots,A_k$  as  $U(t)=\sum_{i=1}^k u_i(t)A_i$.  In  this representation, horizontal curves are the solutions of $\frac{dg}{dt}=\sum_{i=1}^ku_i(t)gA_i$, and their energy is given by  
$$
\frac{1}{2}\int_0^T\langle U(t),U(t)\rangle_\fp\,dt=\frac{1}{2}\int_0^T \sum_{i=1}^ku_i^2(t)\,dt.
$$

To take advantage of the left-invariant symmetries, the cotangent bundle $T^*G$   will be represented by $G\times \fg^*$, where $\fg^*$ stands for the dual of $\fg$. In this representation,  points $\xi\in T_g^*G$  are viewed as the pairs $(g,\ell)$, $\ell\in\fg^*$ defined by  $\xi (gA)=\ell(A)$ for any $A\in\fg$. Then the Hamiltonian
$h_A(\xi)$ of any left-invariant vector field $X_A(g)=gA$ is given by $h_A(\ell)=\ell(A)$.  In particular,
the control system  $ \frac{dg}{dt}=\sum_{i=1}^k u_i(t)gA_i$, together with the associated energy functional, lifts to   the extended Hamiltonian $$h_{U(t)}(\ell)=-\frac{\lambda}{2} \sum_{i=1}^k u_i^2(t)+\sum_{i=1}^ku_i(t)h_i(\ell),\quad \lambda=1,0,$$ where $h_i(\ell)=\ell(A_i)$, $i=1,\dots,k$. The corresponding Hamiltonian vector field $\vec h_{U(t)}$ is called the Hamiltonian lift of the energy-extended control system. Its integral curves $(g(t),\ell(t))$ are the solutions of 
\begin{equation*}\label{Ham}
\frac{dg}{dt}=g\, U(t),\quad \frac{d\ell}{dt}=-\ad^*U(t)(\ell(t)),\quad  U(t)=\sum_{i=1}^ku_i(t)A_i,
\end{equation*}
where $(\ad^*U(t)(\ell))(A)=\ell([U(t),A])$, $A\in\fg$, see~\cite{Jc2}.

According to the Maximum Principle, every optimal  solution $g(t)$ generated by a control  $U(t)$ is the projection of an integral curve $ (g(t),\ell(t))$ of the Hamiltonian vector field $\vec h_{U(t)}$ such that
\begin{equation}\label{Max}
h_{U(t)}(\ell(t))\geq-\frac{\lambda}{2} \sum_{i=1}^k  v_i^2+\sum_{i=1}^kv_ih_i(\ell(t)),
 \end{equation} 
 for any $(v_1,\dots,v_k)\in \mathbb R^k$ and all $t$.
 In addition, the Maximum Principle requires that $\ell(t)\neq 0$ when $\lambda=0$.
 
 Integral curves  $(g(t),\ell(t))$ of $\vec h_{U(t)}$  that satisfy the conditions of the Maximum Principle ~\eqref{Max}
 are called {\it extremal}; {\it abnormal extremal} when $\lambda=0$, and {\it normal extremal} when $\lambda=1$. In the abnormal case,  inequality~\eqref{Max} yields constraints
 \begin{equation*}
 h_i(\ell(t))=0,\quad i=1,\dots,k, 
 \end{equation*}
 while in the normal case,  the inequality shows that the  extremal control $U(t)$  is a  critical point of the Hamiltonian $h_{U(t)}(\ell)=-\frac{1}{2} \sum_{i=1}^k u_i^2(t)+\sum_{i=1}^ku_i(t)h_i(\ell)$, that is,  the extremal control is of the form $U(t)=\sum_{i=1}^kh_i(\ell(t))A_i.$

The above shows that the normal extremals are the solution curves of a single Hamiltonian vector field corresponding to the Hamiltonian
\begin{equation*}\label{HVF}
H(\ell)=\frac{1}{2}\sum_{i=1}^kh_i^2(\ell).
\end{equation*}
 We will not pursue the abnormal extremals since it is known  that their projections on $G$ cannot be optimal when $\mathcal H_\fp$ is a two step bracket generating distribution (see~\cite{AS}).
  
It follows that the sub-Riemannian geodesics are  the projections of the integral curves of the Hamiltonian vector field $\vec H$  on  energy level $H=\frac{1}{2} $, since   the energy functional is equal to the length functional  only over the horizontal curves parametrized by arc length by Proposition~\ref{prop:2.1}. That is, the sub-Riemannian geodesics are the projections of curves $(g(t),\ell(t))$ that are the solutions of 
 \begin{equation}\label{Hameq}
\frac{dg}{dt}=g\, (dH),\quad \frac{d\ell}{dt}=-\ad^*dH(\ell)(\ell(t)),
\end{equation}
on the energy level set $H=\frac{1}{2}$, where $dH=\sum_{i=1}^k h_i(\ell)A_i$ is the differential of $H$. 

For our purposes, however, it will be more convenient to express equation  ~\eqref{Hameq}  on the tangent bundle $G\times\fg$   rather than  the cotangent bundle $G\times\fg^*$.  For that reason,
 $\fg^*$ will be identified with $\fg$ via the bilinear form $\langle .\,,.\rangle$, i.e., 
 \begin{equation*}\label{eq:444}
 \ell\in\fg^*\Longleftrightarrow L\in\fg\quad\text{if and only if}\quad\ell(A)=\langle L,A\rangle,\ \text{for all} \ A\in\fg.
 \end{equation*} 
 Then, relying on~\eqref{invariance},
$$
\begin{array}{lcl}\langle \frac{dL}{dt},A\rangle&=&\frac{d\ell}{dt}(A)=-\ad^*dH(\ell)(\ell(t))(A)=-\ell([dH,A])\\
&&\\
&=&\langle L,[dH,A]\rangle=\langle[dH,L],A\rangle .
\end{array}$$ 
Since $A$ is an arbitrary element of $\fg$, $\frac{dL}{dt}=[dH,L]$. Hence the extremal equations ~\eqref{Hameq}  are equivalent to 
 \begin{equation}\label{Hameq1}
\frac{dg}{dt}=g(t)dH(\ell(t)),\qquad \frac{dL}{dt}=[dH(\ell),L].
\end{equation}

Let now $\fp^\perp$ denote the orthogonal complement of $\fp$ in $\fg$  relative to  $\langle.\,,.\rangle$. Since $\langle.\,,.\rangle$ is a symmetric and non-degenerate quadratic form, $\fg=\fp\oplus\fp^\perp.$  Then each $L\in\fg$ can be written as $L=L_\fp+L_{\fp^\perp}$ with $L_\fp\in\fp$ and $L_{\fp^\perp}\in\fp^\perp$. 
Relative to the orthonormal basis $A_1,\dots,A_k$, $L_\fp=\sum_{i=1}^k\langle  L,A_i\rangle A_i$. But, $\langle L,A_i\rangle=\ell(A_i)=h_i(\ell)$, hence $dH=L_\fp$. 

The above shows that the Hamiltonian $H$  can be written as $H=\frac{1}{2}\langle L_\fp,L_\fp\rangle$ and the associated equations~\eqref{Hameq1}  can be written as
\begin{equation*}\label{Hameq2}
\frac{dg}{dt}=g\, L_\fp,\quad \frac{dL}{dt}=[L_\fp,L].
\end{equation*}

Under the previous assumptions, together with the condition that $\fp ^\perp$ is a Lie subalgebra of $\fg$, we now come to the main theorem of the paper. 
\begin{theorem}\label{prop:4.3}  Assume that $\fp^\perp$ is a Lie subalgebra of $\fg$. Then, the sub-Riemannian geodesics are given by  
\begin{equation}\label{eq:50}
g(t)= g(0)\exp(t(P_\fp+P_{\fp^\perp}))\exp(-tP_{\fp^\perp}),
\end{equation} 
for some constant elements $P_\fp\in\fp$  and $P_{\fp^\perp}\in\fp^\perp$ with $\|P_{\fp}\|=1$. 

The Riemannian geodesics  on $M=G/K$ are the projections of the sub-Riemannian geodesics for which $P_\fp+P_{\fp^\perp}$ is orthogonal to $\fk$.
\end{theorem}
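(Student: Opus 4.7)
The plan is to apply the Maximum Principle from Section~\ref{MP} to write down the extremal equations, reduce them via a Lie-algebraic identity that uses the subalgebra hypothesis on $\fp^\perp$, and then integrate the resulting linear flow on $\fg$ explicitly. The Maximum Principle gives that the normal extremals on the unit-energy level $H=\tfrac12\langle L_\fp,L_\fp\rangle=\tfrac12$ satisfy
\begin{equation*}
\dot g = g\,L_\fp, \qquad \dot L = [L_\fp,L],
\end{equation*}
where $L=L_\fp+L_{\fp^\perp}$ is the decomposition coming from $\fg=\fp\oplus\fp^\perp$. The crucial algebraic input is the inclusion $[\fp,\fp^\perp]\subseteq\fp$, which I would prove as follows: for $P\in\fp$ and $Q,Q'\in\fp^\perp$, the $\Ad_G$-invariance of the form yields $\langle[P,Q],Q'\rangle=\langle P,[Q,Q']\rangle$, and this vanishes because $[Q,Q']\in\fp^\perp$ (subalgebra hypothesis) while $P\in\fp$; since $Q'$ is arbitrary in $\fp^\perp$, this forces $[P,Q]\in(\fp^\perp)^\perp=\fp$.

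Given this inclusion, $\dot L=[L_\fp,L_\fp]+[L_\fp,L_{\fp^\perp}]=[L_\fp,L_{\fp^\perp}]\in\fp$, so the adjoint equation decouples into $\dot L_{\fp^\perp}=0$ and a linear ODE for $L_\fp$ with constant coefficient $\ad(L_{\fp^\perp})$. Hence $L_{\fp^\perp}$ is a conserved quantity; denote it (up to a sign convention) by $P_{\fp^\perp}\in\fp^\perp$ and set $P_\fp:=L_\fp(0)$. The solution is $L_\fp(t)=\Ad(e^{tP_{\fp^\perp}})P_\fp$, which stays in $\fp$ because $\Ad(e^{tP_{\fp^\perp}})$ preserves $\fp^\perp$ and hence also $\fp=(\fp^\perp)^\perp$ by the $\Ad_G$-invariance of the form. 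A direct differentiation then verifies that
\begin{equation*}
g(t) = g(0)\,e^{t(P_\fp+P_{\fp^\perp})}\,e^{-tP_{\fp^\perp}}
\end{equation*}
has $g^{-1}\dot g=L_\fp(t)$ and so integrates the state equation~\eqref{eq:50}; the constraint $\|P_\fp\|=1$ is imposed by restriction to the energy level $\tfrac12$.

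For the Riemannian assertion, I would invoke Proposition~\ref{prop:1.2}: a projected curve $\pi(g(t))$ on $M=G/K$ is a Riemannian geodesic iff $g(t)$ is a horizontal minimum-length curve between the $K$-fibers $\pi^{-1}(m(0))$ and $\pi^{-1}(m(T))$. Applying the Maximum Principle to this problem with endpoints free on the fibers, the transversality conditions $\ell(t)\perp g(t)\fk$ at $t=0,T$ translate, in the left-trivialization and under the identification $\fg^*\cong\fg$, into $L(t)\in\fk^\perp$ at the endpoints. Because $[\fk,\fp]\subseteq\fp$ and the form is $\Ad_G$-invariant, $\Ad_K$ preserves $\fp$ and hence $H$ is $\Ad_K$-invariant; Noether's theorem then yields conservation of $L|_\fk$ along the flow, so the transversality reduces to the single condition $L(0)\in\fk^\perp$, which in the parametrization just derived reads $P_\fp+P_{\fp^\perp}\in\fk^\perp$. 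The hardest step is the algebraic lemma $[\fp,\fp^\perp]\subseteq\fp$: without it the adjoint equation does not decouple and the product-of-exponentials form is unavailable. Once this lemma and the conservation of $L_{\fp^\perp}$ are in hand, the remainder is direct integration and a standard transversality argument.
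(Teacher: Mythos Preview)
Your proposal is correct and follows essentially the same route as the paper: derive the extremal system $\dot g=gL_\fp$, $\dot L=[L_\fp,L]$, use the invariance identity together with the subalgebra hypothesis on $\fp^\perp$ to obtain $[\fp,\fp^\perp]\subseteq\fp$, decouple to get $L_{\fp^\perp}$ constant and $L_\fp(t)=\Ad(e^{tP_{\fp^\perp}})P_\fp$, and then check the product-of-exponentials formula. The only cosmetic differences are that the paper verifies the formula for $g(t)$ by the substitution $\tilde g(t)=g(t)e^{tP_{\fp^\perp}}$ rather than by direct differentiation, and that for the Riemannian part the paper shows $\langle L(t),A\rangle$ is constant for $A\in\fk$ by the one-line computation $\langle\dot L,A\rangle=\langle L_{\fp^\perp},[L_\fp,A]\rangle=0$ (using $[\fp,\fk]\subseteq\fp$), whereas you phrase the same conservation law via $\Ad_K$-invariance of $H$ and Noether's theorem.
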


\begin{proof}
 Sub-Riemannian geodesics are the projections of
\begin{equation*}\label{eq:48}
\frac{dg}{dt}=g(t)L_\fp(t),\qquad\frac{dL}{dt}=[L_\fp(t),L(t)],
\end{equation*}
on the energy level set $H=\frac{1}{2}\|L_\fp\|^2=\frac{1}{2}$. 

We now address the solutions of 
\begin{equation*}
\frac{dL}{dt}=[L_\fp(t),L(t)],
\end{equation*} 
or, equivalently, the solutions of 
\begin{equation}\label{eq:49}
\frac{dL_\fp}{dt}+\frac{dL_{\fp^\perp}}{dt}=[L_\fp(t),L(t)]=[L_\fp(t),L_{\fp^\perp} (t)].
\end{equation}

Due to the $\ad_\fk$-invariance property of $\langle .\, ,. \rangle$, and the assumption that $\fp^\perp$ is a Lie subalgebra of $\fg$, we have
$$\langle [L_\fp,L_{\fp^\perp}],\fp^\perp\rangle=\langle L_\fp,[L_{\fp^\perp},\fp^\perp]\rangle=0,$$
 from which we conclude that 
 $[L_\fp (t),L_{\fp^\perp}(t)] \in \fp$.  Consequently, $\displaystyle\frac{dL_{\fp^\perp}}{dt}=0$, hence $L_{\fp^\perp}(t)=P_{\fp^\perp},$ for some constant element $P_{\fp^\perp}\in\fp^\perp$.
 
 So, equation  ~\eqref{eq:49} reduces to   
$$
\frac{dL_{\fp^\perp}}{dt}=0,\qquad \frac{dL_\fp}{dt}=[L_\fp (t), P_{\fp^\perp}].
$$
Therefore,
\begin{equation}\label{**}
U(t)=L_\fp(t)=\exp(tP_{\fp^\perp})P_{\fp}\exp(-tP_{\fp^\perp}),\end{equation}
 where $P_{\fp}=L_\fp(0)$.  

The corresponding sub-Riemannian geodesics are the solutions of $\frac{dg}{dt}=g(t)U(t)$. In order to show that the solution of this differential equation  has the required form, we define
$\tilde g(t)=g(t)\exp(tP_{\fp^\perp})$ and use~\eqref{**} to obtain 
$$
\begin{array}{lcl}
\frac{d\tilde{g}}{dt}&=&g(t)U(t)\exp(tP_{\fp^\perp})+\tilde g(t) P_{\fp^\perp}\\
&&\\
&=&\tilde {g}(t)\big(\exp(-tP_{\fp^\perp})U(t)\exp(tP_{\fp^\perp})+P_{\fp^\perp}\big)\\
&&\\
&=&\tilde g(t)(P_\fp+P_{\fp^\perp}).
\end{array}$$
Hence, $g(t)$ is given by~\eqref{eq:50}.

To complete the proof, we will use the fact  that the geodesics in the quotient space $M=G/K$  are the projections of the extremal curves  in $G$ that satisfy the transversality conditions, implied by Proposition~\ref{prop:1.2}, which, after the identification $\ell\rightarrow L$, means that $L(t)$, the extremal curve that projects onto a geodesic in $G/K$, is orthogonal to $\fk$ at $t=0$ and $t=T$. Since the horizontal distribution $\cH _\fp$ is $K$-invariant, the Hamiltonian lift $h_{A}(L)=\langle L,A\rangle$ of any left-invariant vector field $gA$, $A\in\fk$  is constant along the solutions of~\eqref{eq:49}.  Indeed, $\frac{d}{dt}\langle L(t),A\rangle=\langle [L_{\fp(t)^\perp},L_\fp(t)],A\rangle=\langle L_{\fp(t)^\perp},[L_\fp(t),A]\rangle=0$, because of $[L_\fp(t),A]\in\fp$. 

 So, since $\langle L(t),A\rangle$ is constant, $\langle L(0),A\rangle=0$ if and only if $\langle L(t),A\rangle=0$ for all $t$. It follows that the orthogonality conditions reduce to $L(0)=P_\fp+P_{\fp^\perp}\perp\fk$.
\end{proof}

As a corollary we have the following  result,~\cite[Proposition 8.30]{Jc2}.
\begin{corollary} \label{cor1}
If $\fp^\perp=\fk$, the sub-Riemannian geodesics are given by  
\begin{equation}\label{eq:50a}
g(t)= g(0)\exp(t(P_\fp+P_{\fk}))\exp(-tP_{\fk}),
\end{equation} 
for some constant elements $P_\fp\in\fp$  and $P_{\fk}\in\fk$ with $\|P_{\fp}\|=1$. 

The Riemannian geodesics  on $M=G/K$ are the projections of the sub-Riemannian geodesics~\eqref{eq:50a} for which $P_{\fk}=0$.\end{corollary}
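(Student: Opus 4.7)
The plan is to derive the corollary as a direct specialization of Theorem~\ref{prop:4.3}. First I would verify that the hypothesis of the theorem is automatically met: since $\fk$ is the Lie algebra of the isotropy subgroup $K\le G$, it is tautologically a Lie subalgebra of $\fg$. Hence when $\fp^\perp=\fk$, the assumption that $\fp^\perp$ is a Lie subalgebra requires no further work, and Theorem~\ref{prop:4.3} applies verbatim. Substituting $\fp^\perp=\fk$ and relabeling $P_{\fp^\perp}$ as $P_\fk$ yields formula~\eqref{eq:50a} together with the normalization $\|P_\fp\|=1$.

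For the second statement, I would start from the criterion supplied by Theorem~\ref{prop:4.3}: a sub-Riemannian geodesic projects to a Riemannian geodesic on $M=G/K$ precisely when $P_\fp+P_{\fp^\perp}\perp\fk$. Setting $\fp^\perp=\fk$, the relation $\fp\perp\fk$ makes the $P_\fp$ contribution automatically orthogonal to $\fk$, so the condition collapses to $P_\fk\perp\fk$, i.e.\ $\langle P_\fk,A\rangle=0$ for every $A\in\fk$.

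To conclude $P_\fk=0$, I would invoke the non-degeneracy of $\langle.\,,.\rangle$ on $\fk$: because $\langle.\,,.\rangle$ is non-degenerate on $\fg$ and its restriction to $\fp$ is non-degenerate (indeed positive definite), the standard splitting argument for non-degenerate bilinear forms gives $\fg=\fp\oplus\fp^\perp$ with the form remaining non-degenerate on $\fp^\perp=\fk$. Consequently, the orthogonality condition $\langle P_\fk,A\rangle=0$ for all $A\in\fk$ forces $P_\fk=0$, as claimed.

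The proof is essentially bookkeeping: no new idea is required beyond Theorem~\ref{prop:4.3}. The only subtlety worth flagging is the non-degeneracy of $\langle.\,,.\rangle|_\fk$, which is what allows the orthogonality condition to translate into vanishing of $P_\fk$; without it one could only conclude that $P_\fk$ lies in the radical of $\langle.\,,.\rangle|_\fk$. Since the paper has already assumed that $\langle.\,,.\rangle$ is $\Ad_G$-invariant, non-degenerate on $\fg$, and positive definite on $\fp$, this is the expected mild observation rather than a genuine obstacle.
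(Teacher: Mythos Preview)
Your proposal is correct and follows exactly the route the paper intends: the corollary is stated without an explicit proof because it is an immediate specialization of Theorem~\ref{prop:4.3}, and you carry out precisely that specialization. Your added remark on the non-degeneracy of $\langle.\,,.\rangle|_\fk$ (which is what turns $P_\fk\perp\fk$ into $P_\fk=0$) is the only point not entirely automatic, and your justification via the non-degeneracy on $\fg$ together with the positive-definiteness on $\fp$ is the standard and correct one.
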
 

\begin{proposition}\label{prop:4.4} If $\fp^\perp=\fk$, then the projection of a sub-Riemannian geodesic on the quotient space $M=G/K$ is a curve of constant geodesic curvature relative to the  Riemannian  metric. 
\end{proposition}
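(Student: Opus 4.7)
The plan is to exhibit the projected curve as the orbit of a one-parameter subgroup of isometries of $M$ (shifted by a single isometry), and then use the fact that curves arising as such orbits automatically have constant geodesic curvature.

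First, by Corollary~\ref{cor1}, any sub-Riemannian geodesic has the form
$$g(t)=g(0)\exp(t(P_\fp+P_\fk))\exp(-tP_\fk),\qquad P_\fp\in\fp,\ P_\fk\in\fk,\ \|P_\fp\|=1.$$
Since $\exp(-tP_\fk)\in K$ fixes the base point $m_0$, projection to $M=G/K$ gives
$$m(t)=\phi_{g(0)\exp(t(P_\fp+P_\fk))}(m_0).$$
I would first record that $m(t)$ is parametrized by arc length in the induced Riemannian metric: by equation~\eqref{**}, the velocity along the horizontal lift is $U(t)=\Ad_{\exp(tP_\fk)}(P_\fp)\in\fp$, whose $\fp$-norm equals $\|P_\fp\|=1$ by the $\Ad_K$-invariance of $\langle\cdot,\cdot\rangle_\fp$.

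Next, I would observe that the homogeneous metric on $M$ is $G$-invariant: the $\Ad_K$-invariance of $\langle\cdot,\cdot\rangle_\fp$ ensures that pushing forward the metric at $m_0$ by any $(\phi_g)_*$ is well-defined independently of the coset representative, and hence $\phi_h$ is an isometry of $M$ for every $h\in G$. In particular, the one-parameter family of diffeomorphisms
$$I_s:=\phi_{g(0)\exp(s(P_\fp+P_\fk))g(0)^{-1}},\qquad s\in\IR,$$
is a one-parameter group of isometries of $M$. A direct computation using the homomorphism property of $\phi$ gives $I_s(m(t))=m(t+s)$ for all $s,t$.

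Finally, since isometries preserve the Levi-Civita connection and send unit tangent vectors to unit tangent vectors, $(I_s)_*\dot m(0)=\dot m(s)$ and $(I_s)_*\bigl(\nabla_{\dot m}\dot m\bigr)\big|_{t=0}=\bigl(\nabla_{\dot m}\dot m\bigr)\big|_{t=s}$. Taking Riemannian norms,
$$\|\nabla_{\dot m(s)}\dot m\|=\|\nabla_{\dot m(0)}\dot m\|\quad\text{for all }s,$$
which is exactly the constancy of the geodesic curvature along $m(t)$. The only genuinely delicate point is verifying $G$-invariance of the induced metric (equivalently, checking that $I_s$ is really an isometry), but this follows cleanly from $\Ad_K$-invariance of $\langle\cdot,\cdot\rangle_\fp$; everything else is bookkeeping.
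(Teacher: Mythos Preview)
Your argument is correct and takes a genuinely different route from the paper. The paper computes directly: it invokes the explicit formula for the covariant derivative on $M=G/K$ in terms of horizontal lifts (if $\dot m$ lifts to $g(t)U(t)$ with $U\in\fp$, then $\tfrac{D}{dt}\dot m$ is the projection of $g(t)\bigl(\tfrac{dU}{dt}+\tfrac{1}{2}[U,U]_\fp\bigr)=g(t)\tfrac{dU}{dt}$), differentiates $U(t)=\Ad_{\exp(tP_\fk)}P_\fp$ to obtain $\Ad_{\exp(tP_\fk)}[P_\fk,P_\fp]$, and reads off the constant norm $\|[P_\fp,P_\fk]\|$ by $\Ad_K$-invariance. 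You instead recognize $m(t)$ as the orbit of a one-parameter group of isometries and use only that isometries preserve the Levi-Civita connection. The payoff of the paper's computation is the explicit value $\|[P_\fp,P_\fk]\|$ of the geodesic curvature, which serves as the template for the analogous calculation in Proposition~\ref{prop:5.1} where one actually needs to know when the curvature vanishes; your symmetry argument is conceptually cleaner and in fact yields constancy of all higher Frenet invariants as well, but does not by itself produce a formula for the curvature.
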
 

\begin{proof} Recall 
 that the geodesic curvature of a curve $m(t)$ parametrized by its arc length is equal to the length of the covariant derivative of $\frac{dm}{dt}$ along $m(t)$. In this context it is most convenient to express the covariant derivative in terms of its horizontal lift as follows.

 Let $Y(t)$ denote a vector field defined along the curve $m(t)$ in $M$. Then $Y(t)$ is the projection of a horizontal curve $g(t)W(t)$, for some curve $W(t)\in\fp$, where $g(t)$ denotes the horizontal curve that projects onto $m(t)$. It follows that $\frac{dm}{dt}$ is the projection of a curve $g(t)U(t)$ for some $U(t)\in\fp$. Then the covariant derivative $\frac{DY}{dt}$ of $Y(t)$ along $m(t)$ is the projection of 
 \begin{equation*}\label{eq:56A}
 g(t)\Big(\frac{dW}{dt}+\frac{1}{2}[U(t),W(t)]_{\fp}\Big),
 \end{equation*}
on $M$, where $[U,W]_\fp$ denotes the projection of $[U,W]$ on $\fp$.

In this situation $m(t)=\pi(g(t))$, where $g(t)$ is given by (\ref{eq:50a}).
Then, $\frac{dg}{dt}=g(t)U(t)=g(t)(\exp(tP_{\fk})P_\fp\exp(-tP_{\fk}))$ and $\|U(t)\|=\|P_\fp\|=1$. Hence $m(t)$ is parametrized by arc length.  
It follows that 
$$
\frac{D}{dt} \Big(\frac{dm}{dt}\Big)=\pi_*\Big(g(t)\frac{dU}{dt}\Big)=\pi_*\big(g(t) \exp(tP_{\fk})[P_\fp,P_{\fk}]\exp(-tP_{\fk})\big).$$
But then $\Big\|\frac{D}{dt}\Big( \frac{dm}{dt}\Big)\Big\|=\big\|  \exp(tP_{\fk})[P_\fp,P_{\fk}]\exp(-tP_{\fk })\big\|=\big\|\,[P_\fp,P_{\fk} ]\,\big\|$.

\end{proof}


\section{Homogeneous metrics on general Stiefel manifolds $\St^n_k(V)$}\label{sec:Grassmann}

\subsection{Stiefel manifolds}
A Stiefel manifold $\St^n_k(V)$  consists  of ordered $k$ orthonormal vectors $v_1,\dots,v_k$ in an $n$-dimensional Euclidean vector space~$V$.  We will focus on the cases when the vector space $V$ is an Euclidean space $V=\IR^n$, an Hermitian complex vector space $V=\IC^n$, or a quaternionic space $V=\IH^n$ (with the right multiplication by scalars) equipped with its inner product $(u,v)=\sum_{l=1}^n\bar u_l v_l$.  We will identify points  $m=(v_1,\dots,v_k)\in \St^n_k(V)$  with matrices  $M_{nk}$ whose columns  consist of the coordinate vectors $v_1,\dots,v_k$ with respect to a chosen orthonormal basis for $V$. Each such matrix  $M_{nk}$ satisfies $M^*_{nk}M_{nk}=I_k$, where $^*$ stands for the  transpose, the complex conjugate, or the quaternion conjugate, depending on the case.

The quaternionic Stiefel manifold requires some additional explanations. Let $1,\ii,\jj,\kk$ denote the standard basis in the quaternion algebra $\IH$. Then, every quaternion is a linear combination $q=q_0+q_1\ii+q_2\jj+q_3\kk$ with real coefficients. The conjugate $\bar q$ of the quaternion $q$ is given by $\bar q=q_0-q_1\ii-q_2\jj-q_3\kk$. The product of two quaternions is defined by using the coordinate representation and the law $\ii^2=\jj^2=\kk^2=\ii\jj\kk=-1$.
It follows that $\overline{qq'}=\bar q'\bar q$ and $\bar qq=q_0^2+q_1^2+q_2^2+q_3^2=|q|^2$.  
The notion of conjugacy  extends to $\IH^n$ and yields  the inner product $(v,w)=\sum_{l=1}^n\bar v_lw_l$.  It readily follows that  
\begin{equation}\label{eq:quat-inner-product}
(v\alpha,w)=\bar\alpha(v,w),\quad(v,w\alpha)=(v,w)\alpha,\quad(v,w)=\overline{(w,v)},
\end{equation}
for any $\alpha\in\IH$.
Then $\Sp(n)$  is the group of matrices that leave invariant the quaternionic Hermitian product~\eqref{eq:quat-inner-product}. It follows that  $\Sp(n)$ consists of  $n\times n$ matrices $\Theta$ with quaternionic entries that satisfy $\Theta^*\Theta=\Theta\Theta^*=I_n$.
  Reminiscent of the unitary group, one can show that   $\Sp(n)$ is isomorphic to  $\Sp(2n,\IC)\cap \U(2n)$~\cite[page 445]{Hl}.

\subsection{Stiefel manifolds as homogeneous spaces}


All Stiefel manifolds are homogeneous manifolds, and can be realized as the quotients of Lie groups through several group actions. Below we will describe two such actions. To avoid unnecessary repetitions,  we will use  $G_n$  to denote    $\SO(n)$ in the real case, $\SU(n)$ in the complex case, and $\Sp(n)$ in the quaternionic case. 

Both $G_n$ and $G_{k}$ act  on elements of $\St_k^n(V)$, represented as $n\times k$ matrices. The first group acts by the matrix multiplication on the left, and the second group by the matrix multiplication on the right.

Let us first consider the full group action.

$\bullet $ {\it $\St_k^n(V)$ as a homogeneous manifold $G_n\times G_k/G_k\times G_{n-k}$}. 
Here the  full group $G=G_n\times G_k$ acts on $\St_k^n(V)$  by  
\begin{equation*}\label{eq:13}
\phi((r,s),m)=rms^{-1},\quad m\in \St_k^n(V),\quad r\in G_n,\quad s\in G_k.
\end{equation*}
The action is transitive, and the orbit  through  $m=I_{nk}=\begin{pmatrix} I_k\\0\end{pmatrix}$  consists of the  matrices  $M_{nk}=rI_{nk}s^{-1}$, $r\in G_n$, $s\in G_k$. The isotropy group $K$, that leaves $I_{nk}$ fixed, consists of   matrices $r\in G_n$ and $s\in G_k$ such that 
\begin{equation}\label{eq:15}
r I_{nk}=I_{nk}s.
\end{equation}
The matrix $r\in G_n$ can be written in block form as $r=\begin{pmatrix} R_1&R_2\\R_3&R_4\end{pmatrix}$, with $R_1$ an $k\times k$-matrix, and the remaining matrices of the corresponding sizes.
 Then~\eqref{eq:15} holds for $r$ only when
$R_1=s$ and $R_2=R_3=0$. Therefore, $r=\begin{pmatrix} R_1&0\\0&R_4\end{pmatrix}$. This shows that the isotropy group $K$ is   
\begin{equation}\label{eq:isotr-big}
K=\Big\{\Big(\begin{pmatrix} S&0\\0&T\end{pmatrix},S\Big):\ S\in G_k,\ T\in G_{n-k}\Big\}  \cong G_k\times G_{n-k}. 
\end{equation} 

$\bullet $ {\it $\St_k^n(V)$ as a homogeneous manifold $G_n/G_{n-k}$}. 
The  reduced  group action  is given by 
\begin{equation*}\label{eq:14}
\phi(g,m)=gm,\quad m\in \St_k^n(V),\quad g\in G_n.
\end{equation*}
The orbit of the reduced action through the matrix $m=I_{nk}$ consists of  matrices  $M_{nk}=gI_{nk}\in \St_k^n(V)$, $g\in G_n$, i.e., the first $k$ columns of $g$. The isotropy group  at the point $I_{nk}$ is equal to 
\begin{equation}\label{eq:isotr-reduced}
K=\Big\{\begin{pmatrix} I_k&0\\0&H\end{pmatrix}:\ H\in G_{n-k}\Big\}\cong G_{n-k}.
\end{equation}

The above shows that each Stiefel manifold $\St_k^n(V)$ can be represented in two ways: as the full quotient $(G_n\times G_k)/G_k\times G_{n-k}$, as well as the reduced quotient $G_n/G_{n-k}$.

In what follows we will  write $\St_k^n(V)=G/K$ with the understanding that  $G$ and $K$ are either $G=G_n$ and $K=G_{n-k}$, or $G=G_n\times G_k$ and $K=G_k\times G_{n-k}$, depending on the context. Then, $\fg$ and  $\fk$ will denote the Lie algebras of $G$ and $K$. Similarly $\fg_k$, $\fg_{n-k}$ will denote the Lie algebras of $G_k$ and $G_{n-k}$. 


\subsubsection{Homogeneous metrics}


Each Lie algebra $\fg$ is endowed with a positive definite bilinear form $\langle.\,,.\rangle$. If $\fg_n$ is either $\fso(n)$ or $\fu(n)$, then the form is given by 
\begin{equation}\label{eq:17a}
\langle  A,B\rangle=-\frac{1}{2}\Tr(AB)\quad\text{for}\quad A,B\in\fg_n,
\end{equation}
with $\Tr(A)$ the trace of the matrix $A$. On $\fg_n=\fsp(n)$ the form is written as
\begin{equation}\label{eq:17b}
\langle A,B\rangle=-\frac{1}{ 4}\Tr\big(AB+(AB)^*\big).
\end{equation}
We will often refer to the above forms on $\fg$  as {\it the trace form}, and   to the induced metric as the {\it trace metric}. The trace form extends to  the product $\fg_k\times\fg_{n-k}$ with $\langle.\,,.\rangle=\langle.\,,.\rangle_1+\langle.\,,.\rangle_2$, where $\langle.\,,.\rangle_1$ is the trace form in $\fg_k$ and $\langle.\,,.\rangle_2$ is the trace form in $\fg_{n-k}$.
The trace form on $\fg$ is $\Ad_G$ invariant and satisfies~\eqref{invariance}. Thus the corresponding left-invariant Riemannian metric $\langle .\,,.\rangle_G$ is also bi-invariant.  

We will now introduce three decompositions $\fg=\fp\oplus\fk$ that conform to ~\eqref{eq:3} and induce the left-invariant horizontal distributions $\mathcal H_{\fp}$ that are relevant for the applications. 
\\

 {\it The reduced horizontal distribution} is the horizontal distribution associated with the representation $\St_k^n(V)=G_n/G_{n-k}$. It is induced by the orthogonal complement
 \begin{equation}\label{eq:19b}
\fp=\Big\{\begin{pmatrix} A&B\\-B^*&0\end{pmatrix}:\,  A\in \fg_k \Big\}
\end{equation}
with respect to the trace metric to the isotropy algebra
\begin{equation}\label{eq:16b}
\fk=\Big\{\begin{pmatrix} 0&0\\0&D\end{pmatrix} :\, D\in\fg_{n-k}\Big\}
\end{equation}
of the isotropy group $K$ in~\eqref{eq:isotr-reduced}.
\\

The other two horizontal distributions are associated with the representation  $\St_k^n(V)=(G_n\times G_{n-k})/(G_k\times G_{n-k})$.

 First, is {\it the orthogonal horizontal distribution on $G=G_n\times G_{n-k}$}.
It is induced by the orthogonal complement
\begin{equation}\label{eq:19a}
\fp=\fk^\perp=\Big\{\Big(\begin{pmatrix} A&B\\-B^*&0\end{pmatrix},-A\Big)  :\,  A\in \fg_k \Big\}
\end{equation} 
with respect to the trace metric to the isotropy algebra
\begin{equation}\label{eq:16a}
\fk=\Big\{\Big(\begin{pmatrix} C&0\\0&D\end{pmatrix},C\Big):\ C\in \fg_k,\ D\in\fg_{n-k}\Big\}
\end{equation}
of the isotropy group $K$ in~\eqref{eq:isotr-big}.

Second,
{\it the quasi-geodesic horizontal distribution on $G=G_n\times G_{n-k}$.}
This horizontal distribution is induced by the vector space 
\begin{equation}\label{eq:new}
\fp=\Big\{\Big(\begin{pmatrix} 0&B\\-B^*&0\end{pmatrix},A\Big) :\  A\in \fg_k\Big\} 
\end{equation} 
that is not orthogonal to $\fk$ in~\eqref{eq:16a}.

 Evidently $\fp$  is transversal to $\fk$, and satisfies $\fg=\fp\oplus \fk$, because for any $\begin{pmatrix} A_1&A_2\\-A_2^*&A_3\end{pmatrix}\in \fg_n$  and $B\in \fg_{k}$  we have 
$$
\Big(\begin{pmatrix} A_1&A_2\\-A_2^*&A_3\end{pmatrix},B\Big)
=\Big(\begin{pmatrix}0&A_2\\-A_2^*&0\end{pmatrix},B-A_1\Big)
+\Big(\begin{pmatrix} A_1&0\\0&A_3\end{pmatrix},A_1\Big).
$$
The distribution defined by the left translations of  $\fp$  in~\eqref{eq:new} is called {\it quasi-geodesic  distribution } to emphasize  the connection with curves called {\it{quasi-geodesic}}, whose significance was demonstrated  in~\cite{KL}  in  interpolation problems arising from applications. 

\begin{lemma}\label{lem:pk-decomposition}
In all three cases above, the reduced, orthogonal, and quasi-geodesic  case, the decomposition $\fg=\fp\oplus\fk$ satisfies~\eqref{eq:3}.
\end{lemma}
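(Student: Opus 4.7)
The plan is to verify each of the three conditions in~\eqref{eq:3} separately in each of the three cases, by direct matrix computation using the block form. An arbitrary element of $\fg_n$ may be written as $\begin{pmatrix} A & B \\ -B^* & D \end{pmatrix}$ with $A \in \fg_k$, $D \in \fg_{n-k}$, and $B$ an arbitrary $k \times (n-k)$-matrix over $\IR$, $\IC$, or $\IH$; for $G = G_n \times G_k$ one works with pairs. The direct sum $\fg = \fp \oplus \fk$ is then clear by inspection in each case: the reduced case splits off the $(2,2)$-block $D$, and the orthogonal case admits a splitting analogous to, but simpler than, the one exhibited for the quasi-geodesic case in the paragraph preceding the lemma.

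For the first nontrivial condition $[\fp,\fk] \subseteq \fp$, I would substitute the block forms and carry out the matrix commutator. For instance, in the reduced case
$$
\left[\begin{pmatrix} A & B \\ -B^* & 0 \end{pmatrix},\begin{pmatrix} 0 & 0 \\ 0 & D \end{pmatrix}\right]=\begin{pmatrix} 0 & BD \\ DB^* & 0 \end{pmatrix},
$$
which lies in $\fp$ since $(BD)^* = -DB^*$ by virtue of $D^* = -D$. The orthogonal and quasi-geodesic computations are analogous: in the orthogonal case one verifies that the $\fg_k$-factor of the commutator equals $-[A,C]$, matching the sign convention of $\fp$ in~\eqref{eq:19a}; in the quasi-geodesic case the diagonal blocks of the matrix commutator vanish identically and the $\fg_k$-factor $[A,C]$ is unconstrained, so the result lies in $\fp$ as defined in~\eqref{eq:new}.

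For $\fk \subseteq [\fp,\fp]$, the key observation is that taking $A_1 = A_2 = 0$ in the bracket of two $\fp$-elements of the reduced form produces the block-diagonal matrix with entries $B_2 B_1^* - B_1 B_2^*$ and $B_2^* B_1 - B_1^* B_2$. By choosing the $B_i$ to be rank-one elementary matrices, one can arrange either of these diagonal blocks to vanish while the other sweeps out all of $\fg_k$, respectively $\fg_{n-k}$; this already yields $\fk \subseteq [\fp,\fp]$ in the reduced case. In the orthogonal and quasi-geodesic cases one additionally uses brackets with $B_1 = B_2 = 0$ to obtain the correlated second-factor contribution $[A_1, A_2]$, and forms a linear combination with the $B$-brackets above to match the coupling between the top-left block and the second factor demanded by the definition of $\fk$ in~\eqref{eq:16a}.

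I expect the main obstacle to be the quasi-geodesic case: since $\fp$ there is not orthogonal to $\fk$, the $\fk$-component of $[P_1, P_2]$ is not extracted by a simple orthogonal projection, and one must exhibit an explicit linear combination of brackets in which the $B$-bracket $B_2 B_1^* - B_1 B_2^*$ in the $(1,1)$-block is matched exactly by the commutator $[A_1, A_2]$ in the second factor, while simultaneously producing any prescribed element of $\fg_{n-k}$ in the $(2,2)$-block. Once this matching is carried out, using that $\fg_k$ is generated by its own commutators in the non-degenerate cases, all three conditions of~\eqref{eq:3} are verified simultaneously, completing the proof.
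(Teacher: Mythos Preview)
Your approach coincides with the paper's in its essentials: both establish $\fk\subseteq[\fp,\fp]$ by producing explicit commutators of rank-one (elementary) off-diagonal blocks, and both handle the quasi-geodesic inclusion $[\fp,\fk]\subseteq\fp$ by a direct block computation. The one small methodological difference is that for the reduced and orthogonal cases the paper obtains $[\fp,\fk]\subseteq\fp$ in a single line from the $\Ad$-invariance of the trace form, via $\langle[\fp,\fk],\fk\rangle=\langle\fp,[\fk,\fk]\rangle\subseteq\langle\fp,\fk\rangle=0$, rather than by your explicit commutator calculation; both arguments are valid and yield the same conclusion. Your final caveat---that in the full-group cases one ultimately needs $\fg_k=[\fg_k,\fg_k]$ in order to realize the second-factor entry $C$ as a sum of commutators---is a genuine subtlety that the paper's own proof passes over without comment.
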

\begin{proof}
In the reduced and the orthogonal cases
$\langle [\fp,\fk],\fk\rangle=\langle \fp,[\fk,\fk]\rangle=0,$ because $\fk$ is a Lie algebra, and it  is orthogonal to $\fp$. Therefore, $[\fp,\fk]\subset  \fp$. It is also true for the quasi-geodesic case by a direct calculation.  So in all cases $[\fp,\fk]\subseteq\fp$.  

We will now show that $\fk\subseteq[\fp,\fp]$. In the calculations below, $E_{i,j}$ denotes the $n\times n$ matrix with entry $(i,j)$ equal to $1$ and all other entries equal to~$0$, and $A_{i,j}= E_{i,j}-E_{j,i}$.  Matrices $A_{i,j}$ satisfy the following commutator properties: 
\begin{equation*}\label{commutator-properties}
\left[A_{i,j}, A_{f,l}\right]=-\delta_{il}A_{j,f}-\delta_{jf}A_{i,l}+\delta_{if}A_{j,l}+\delta_{jl}A_{i,f},
\end{equation*}
where $\delta_{ij}$ denotes the Kronecker delta function. 

Taking into consideration the structure of the matrices in $\fk$ and $\fp$ in the reduced case,  given respectively by (\ref{eq:16b}) and (\ref{eq:19b}), it is clear that
$$
\{ A_{k+i,k+j}, \,\, 1\leq i< j \leq n-k\}
$$
is a basis for $\fk$, while 
$$
\{ A_{i,j}, \,\, 1\leq i< j \leq k\} \cup \{ A_{i,k+j}, \,\, 1\leq i\leq k,\, 1\leq j \leq n-k\}
$$
is a basis for $\fp$. Since for any $A_{k+i,k+j} \in \fk$, there exists $A_{l,k+i}, A_{l,k+j}\in \fp$ such that 
\begin{equation} \label{new1}
A_{k+i,k+j}=\left[ A_{l,k+i}, A_{l,k+j} \right], 
\end{equation}
we have proved that in the reduced case $\fk\subseteq[\fp,\fp]$. To show that this inclusion is also true for the other two distributions, it is enough to take into account the structure of the matrices that define the subspaces $\fk$ (given by (\ref{eq:16a})) and $\fp$, given either by~\eqref{eq:19a} or~\eqref{eq:new}, and use~\eqref{new1} together with the following extra identity:
\begin{equation*} \label{new2}
A_{i,j}=\left[ A_{i,k+l}, A_{j,k+l} \right],
\end{equation*}
 which is valid for all $ 1\leq i < j \leq k$ and $l\in \{1, \cdots , n-k\}$. 
 
%
%

\end{proof}

Thus we obtain the following corollary.

\begin{corollary}\label{cor:bracket-generating}
All three horizontal distributions $\mathcal H_\fp$ are  two-step bracket generating distributions, $\mathcal H_\fp(g)+[\mathcal H_\fp,\mathcal H_\fp](g)=T_gG$ for all $g\in G$.
\end{corollary}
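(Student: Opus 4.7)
The plan is to observe that this corollary is essentially a direct consequence of Lemma~\ref{lem:pk-decomposition}, combined with the general observation, already recorded after equation~\eqref{eq:3}, that the algebraic relation $\fk\subseteq[\fp,\fp]$ on the Lie-algebra side translates into two-step bracket generation for the left-invariant distribution $\mathcal H_\fp$. Because everything is left-invariant, it is enough to verify the bracket-generating condition at the identity $e\in G$ and then transport it along left translations.

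First I would note that $\mathcal H_\fp(e)=\fp$, and that for any basis $A_1,\dots,A_k$ of $\fp$ the corresponding left-invariant vector fields $X_i(g)=gA_i$ satisfy $[X_i,X_j](e)=[A_i,A_j]$. Hence $[\mathcal H_\fp,\mathcal H_\fp](e)\supseteq[\fp,\fp]$, which gives
\begin{equation*}
\mathcal H_\fp(e)+[\mathcal H_\fp,\mathcal H_\fp](e)\supseteq \fp+[\fp,\fp].
\end{equation*}
By Lemma~\ref{lem:pk-decomposition}, each of the three decompositions $\fg=\fp\oplus\fk$ satisfies~\eqref{eq:3}, and in particular $\fk\subseteq[\fp,\fp]$. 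Therefore $\fp+[\fp,\fp]\supseteq \fp+\fk=\fg$, and we obtain $\mathcal H_\fp(e)+[\mathcal H_\fp,\mathcal H_\fp](e)=T_eG$.

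Finally, I would invoke left-invariance to extend the identity to arbitrary $g\in G$: since $(L_g)_*$ is a linear isomorphism $T_eG\to T_gG$ that preserves both $\mathcal H_\fp$ and the Lie bracket of left-invariant vector fields, applying $(L_g)_*$ to the equality above yields $\mathcal H_\fp(g)+[\mathcal H_\fp,\mathcal H_\fp](g)=T_gG$ for every $g\in G$. There is no real obstacle here; the only substantive content is already contained in Lemma~\ref{lem:pk-decomposition}, and the corollary is a one-step packaging of that lemma together with the standard correspondence between Lie-algebraic generation and distributional bracket generation for left-invariant distributions.
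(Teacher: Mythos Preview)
Your proposal is correct and matches the paper's approach exactly: the paper states the corollary with no proof, treating it as an immediate consequence of Lemma~\ref{lem:pk-decomposition} together with the observation (already made after~\eqref{eq:3}) that $\fk\subseteq[\fp,\fp]$ forces $\fp+[\fp,\fp]=\fg$ and hence two-step bracket generation for the left-invariant $\mathcal H_\fp$. You have simply written out this implicit reasoning in detail.
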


\begin{theorem}\label{th:isom} 
The homogeneous metrics on the Stiefel manifold $\St_k^n(V)$ induced by the sub-Riemannian metrics relative to the reduced horizontal  distribution on $G_n$, and   the sub-Riemannian  metric  on $G_n\times G_{k}$ relative to the quasi-geodesic distribution are equal, and they are different from the    
homogeneous metric induced by the sub-Riemannian metric relative to  the orthogonal horizontal distribution on $G_n\times G_{k}$.
\end{theorem}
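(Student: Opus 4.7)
The plan is to reduce the claim to a pointwise comparison at the base point $m_0 = I_{nk}$. In each of the three cases the sub-Riemannian metric on $G$ is left-invariant, so the induced Riemannian metric on $\St_k^n(V) = G/K$ is invariant under the corresponding (transitive) $G$-action, and is therefore determined by the quadratic form it induces on the single tangent space $T_{m_0}\St_k^n(V)$. Hence it suffices to compute this quadratic form explicitly in all three cases.

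Concretely, I would identify $T_{m_0}\St_k^n(V)$ with block matrices $Y = \begin{pmatrix} C \\ D \end{pmatrix}$, where $C \in \fg_k$ and $D$ is an arbitrary $(n-k) \times k$ matrix of the appropriate scalar type. In each case the pushforward $\pi_*\colon \fp \to T_{m_0}\St_k^n(V)$ is an isomorphism (by transversality $\fg = \fp \oplus \fk$ and a dimension count), so every tangent vector $Y$ has a unique horizontal preimage whose trace-norm, computed via \eqref{eq:17a} or \eqref{eq:17b}, is by definition $\|Y\|^2$.

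Carrying out the three pushforwards: in the reduced case, $\begin{pmatrix} A & B \\ -B^* & 0 \end{pmatrix} \in \fp \subset \fg_n$ projects to $\begin{pmatrix} A \\ -B^* \end{pmatrix}$, so $(C,D) = (A, -B^*)$ and a block-trace computation gives $\|Y\|^2 = -\tfrac12\Tr(C^2) + \Tr(D^*D)$. In the quasi-geodesic case, $\bigl(\begin{pmatrix} 0 & B \\ -B^* & 0 \end{pmatrix}, Z\bigr) \in \fp$ projects to $\begin{pmatrix} -Z \\ -B^* \end{pmatrix}$, so $(C,D) = (-Z, -B^*)$, and summing the two trace contributions yields $\Tr(B^*B) - \tfrac12\Tr(Z^2) = -\tfrac12\Tr(C^2) + \Tr(D^*D)$, identical to the reduced formula. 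In the orthogonal case, $\bigl(\begin{pmatrix} A & B \\ -B^* & 0 \end{pmatrix}, -A\bigr) \in \fp$ projects to $\begin{pmatrix} 2A \\ -B^* \end{pmatrix}$, so $(C,D) = (2A, -B^*)$, and the two trace contributions combine to $-\Tr(A^2) + \Tr(B^*B) = -\tfrac14\Tr(C^2) + \Tr(D^*D)$, strictly different from the common reduced/quasi-geodesic formula whenever $\fg_k \neq \{0\}$.

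The main obstacle is purely bookkeeping: the three descriptions of $\fp$ use different parameterizations, and the projected tangent vector carries the $\fg_k$-component with different signs and coefficients across the three cases, so care is needed to express all three norms in the common $(C,D)$ coordinates on $T_{m_0}\St_k^n(V)$. Once this is done, the asserted equality and inequality of quadratic forms at $m_0$ follow, and $G$-invariance promotes the comparison from $m_0$ to every point of $\St_k^n(V)$.
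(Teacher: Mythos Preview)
Your approach is correct and actually more streamlined than the paper's. The paper works with an arbitrary curve $m(t)$, takes horizontal lifts $g(t)$ in $G_n$ and $(r(t),s(t))$ in $G_n\times G_k$, establishes the relation $r(t)s^*(t)h=g(t)$ for a constant $h\in K$, and then compares $\|W(t)\|_\fp^2$ with $\|(U_1(t),U_2(t))\|_\fp^2$ through an explicit matrix identity $U_1-U_2=s^*hWh^*s$. You bypass all of this by reducing to a single tangent space via invariance and computing the three quadratic forms directly in common coordinates $(C,D)$ on $T_{m_0}\St_k^n(V)$; this makes the equality of the reduced and quasi-geodesic forms, and the factor-of-two discrepancy on the $\fg_k$-block in the orthogonal case, completely transparent.

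One point does need to be said explicitly, and you gloss over it: the three homogeneous metrics are invariant under \emph{different} groups ($G_n$ in the reduced case, $G_n\times G_k$ in the other two), so ``$G$-invariance promotes the comparison from $m_0$ to every point'' is not automatic. What makes the argument work is that the $G_n\times G_k$-invariant metrics are in particular invariant under the subgroup $G_n\times\{e\}$, which acts on $\St_k^n(V)$ exactly as $G_n$ does in the reduced picture and is already transitive. Hence all three metrics are $G_n$-invariant, and agreement at $m_0$ does propagate. Once you insert this one sentence, your proof is complete and arguably cleaner than the original.
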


\begin{proof}
We start  with the proof of the first statement. Let $\dot m (t)$ denote the tangent vector of a curve $m(t)$ in $\St_k^n(V)$. Then, $m(t)=g(t)I_{nk}$ and $m(t)=r(t)I_{nk}s^*(t)$ for some horizontal curves $g(t)\in G_n$ and $(r(t),s(t))$ in $G=G_n\times G_k$.
Then  
\begin{equation}\label{eq:*}
\dot m (t)=g(t)\, W(t)I_{nk}=r(t)(U_1(t)I_{nk}-I_{nk}U_2(t))s^*(t),
\end{equation}
where 
$$
\frac{dg}{dt}=g(t)W(t),\qquad\frac{dr}{dt}=r(t)U_1(t),\qquad\frac{ds}{dt}=s(t)U_2(t).
$$
It will be convenient  to embed $G_k$ into $G_n$ by identifying $s\in G_k$ with $\begin{pmatrix}S&0\\0&I_{n-k}\end{pmatrix}$, and   identify $I_{nk}$ with  $\begin{pmatrix}I_k&0\\0&0\end{pmatrix}$, so that all the matrices above can be written
as  $n\times n$ matrices  in block form:  
$$W=\begin{pmatrix} A&B\\-B^*&0\end{pmatrix},\quad 
U_1=\begin{pmatrix} 0&C\\-C^*&0\end{pmatrix},\quad
U_2\sim \begin{pmatrix} D& 0\\0&0\end{pmatrix},\ \ A, D \in\fg_k.
$$
Let $\|\dot m(t)\|_1$  and $\|\dot m (t)\|_2$ denote the lengths of $\dot m(t)$ relative to the homogeneous metrics induced by the reduced and the quasi-geodesic  distributions.
Since the sub-Riemannian metrics on $G$ are left-invariant, we need only to compare the norms of the horizontal vectors on the corresponding Lie algebras. Thus
$$
\|W(t)\|^2_{\fp}=\|A(t)\|^2+\Tr(B(t)B^*(t)),
$$
is equal to  $\|\dot m(t)\|_1^2$, and
$$
\|(U_1(t),U_2(t))\|^2_{\fp}=\|U_1(t)\|^2+\|U_2(t)\|^2=\|D(t)\|^2+\Tr(C(t)C^*(t))
$$
is equal to $\|\dot m(t)\|_2^2$. We need to show that $\|\dot m(t)\|_1=\|\dot m(t)\|_2$.

Since $I_{nk}$ commutes with $s^*$,  
\begin{equation*}\label{eq:relation-grs}
m(t)=g(t)\, I_{nk}=r(t)I_{nk}s^*(t)=r(t)s^*(t)I_{nk},
\end{equation*}
 and therefore,
 $r(t)s^{*}(t)h=g(t)$ for some constant $h\in K$, where $K$ is given by~\eqref{eq:isotr-reduced}.   
Now     equation~\eqref{eq:*}
implies that $rs^*hWh^*I_{nk}=r(U_1-U_2)I_{nk}s^*$, or $
(U_1-U_2)I_{nk}=s^*hWh^*sI_{nk},
$
 because $I_{nk}$  commutes with $U_2$, and $I_{nk}h=hI_{nk}=I_{nk}$.   This equality implies that 
 \begin{equation*}\label{U_1-U_2}
U_1-U_2=\begin{pmatrix}- D(t)&C(t)\\-C^*(t)&0\end{pmatrix}=\tilde W,
\end{equation*}
because both of these matrices are of the form $\begin{pmatrix} X&Y\\-X^*&0\end{pmatrix}$. Here we denote $\tilde W=s^*hWh^*s$.  
Then,
\begin{eqnarray*}
\|\dot m(t)\|^2_1&=&
  \|W(t)\|^2_{\fp}
=\|\tilde W(t)\|^2_{\fp}=-\frac{1}{2}\Tr(\tilde W^2(t))
\\
&=&-\frac{1}{2}\Tr\begin{pmatrix} -D(t)&C(t)\\
-C^*(t)&0\end{pmatrix}^2
=
\|D(t)\|^2+\Tr(C(t)C^*(t))
\\
&=&\displaystyle \|(U_1(t),U_2(t))\|^2_{\fp}=\|\dot m(t)\|^2_2.
\end{eqnarray*} 

Now we prove the second statement of the proposition.  In this case 
$$\frac{dg}{dt}=g(t)\, \tilde{U}_1(t),\quad \frac{ds}{dt}=s(t)\, \tilde{U}_2(t)$$ with   $\tilde{U}_1(t)=\begin{pmatrix} A(t)&B(t)\\-B^*(t)&0\end{pmatrix}$, $\tilde{U}_2(t)=\begin{pmatrix} -A(t)&0\\0&0\end{pmatrix}$. The norm of $(\tilde{U}_1, \tilde{U}_2)$ with respect to orthogonal horizontal distribution is given by 
$$
\|(\tilde{U}_1(t), \tilde{U}_2(t))\|^2_{\fp}=\|\tilde{U}_1(t)\|^2+\|\tilde{U}_2(t)\|^2=2\|A(t)\|^2+\Tr(B(t)B^*(t)).
$$  
A calculation similar to the one above shows that 
\begin{equation*}\label{U_1-U_2a}
\tilde W=s^*hWh^*s=\begin{pmatrix} 2A(t)&B(t)\\-B^*(t)&0\end{pmatrix}= \tilde{U}_1 - \tilde{U}_2.
\end{equation*}
Therefore,

$$
\|W(t)\|^2_{\fp}=\|\tilde W(t)\|^2_{\fp}=4\|A(t)\|^2+\Tr(B(t)B^*(t))\neq \|(\tilde U_1(t), \tilde U_2(t))\|_{\fp}^2.
$$

\end{proof}


\subsection{Sub-Riemannian geodesics}~\label{sec:subRiemanian1}
 
\subsubsection{Geodesics on $\St_k^n(V)$, induced by the reduced horizontal distribution}\label{sec:revisited}
\vspace*{0,3 cm}
In this case, $\fp=\fk^\perp$  where  $\fk$ and $\fp$ are given by (\ref{eq:16b}) and (\ref{eq:19b}) respectively, and  hence by    (\ref{eq:50a}), the sub-Riemannian geodesics on $G_n$ are  of the form
\begin{equation*}\label{eq:51}
g(t)=g_0\exp\Big( t\begin{pmatrix} A&B\\-B^*&D\end{pmatrix} \Big)\begin{pmatrix} I_k&0\\0&e^{-tD}\end{pmatrix},
\end{equation*} 
Their  projections on $\St_k^n(V)$ are given by  
$$
m(t)=\pi(g(t))=g_0\exp \Big( t\begin{pmatrix} A&B\\-B^*&D\end{pmatrix} \Big) I_{nk},
$$
and the Riemannian geodesics are of  the form
\begin{equation}\label{eq:52a}
m(t)
=g_0\, e^{t\Omega}I_{nk},\quad \Omega=\begin{pmatrix} A&B\\-B^*&0\end{pmatrix}.\end{equation}  

These geodesics  are called canonical in~\cite{EAS} and normal in ~\cite{FJ}.

 
\subsubsection{Geodesics on $\St_k^n(V)$ induced by the orthogonal horizontal distribution}\label{subsec:orth_distr}

 
In this case $\fp$ and $\fk$ are given by~\eqref{eq:19a} and~\eqref{eq:16a}, respectively.
Then $P_{\fp}=\Big(\begin{pmatrix} A&B\\-B^*&0\end{pmatrix},-A\Big)$, and $P_{\fk}=\Big(\begin{pmatrix} C&0\\0&D\end{pmatrix},C\Big)$, which leads to 
\begin{eqnarray*}\label{eq:53}
g(t)
&=&
g_0\exp\big(t(P_{\fp}+P_{\fk})\big)\exp(-tP_{\fk})\nonumber
\\
&=&
g_0\Big(\exp\Big(t\begin{pmatrix} A+C&B\\-B^*&D\end{pmatrix}\Big),e^{t(-A+C)}\Big)\Big(\begin{pmatrix} e^{-tC}&0\\0&e^{-tD}\end{pmatrix},e^{-tC}\Big).
\end{eqnarray*}
The projection is given by
\begin{eqnarray*}
\pi(g(t))
&=&
g_0\Big(\exp \Big(t\begin{pmatrix} A+C&B\\-B^*&D\end{pmatrix}\Big)
\Big(\begin{pmatrix} e^{-tC}&0\\0&e^{-tD}\end{pmatrix}I_{nk}e^{tC}e^{t(A-C)}\Big)
\\
&=&
g_0\Big(\exp \Big(t\begin{pmatrix} A+C&B\\-B^*&D\end{pmatrix}\Big)I_{nk}e^{t(A-C)}\Big).
\end{eqnarray*}
If $g_0=(r,s)\in G_n\times G_k$, then
the geodesics through the point $g_0I_{nk}=rI_{nk}s^*$, corresponding to  $P_{\fk}=0$, i.e., $C=D=0$, have the form
\begin{equation}\label{eq:54}
m(t)=g_0\exp\Big(t\begin{pmatrix} A&B\\-B^*&0\end{pmatrix}\Big)I_{nk}e^{tA}
=re^{t\Omega}I_{nk}e^{tA}s^*
\end{equation} 
 
\subsubsection{Geodesics on $\St_k^n(V)$ induced by quasi-geodesic horizontal distribution}\label{subsec:qg_distr}


The quasi-geodesic distribution is generated by~\eqref{eq:16a} and~\eqref{eq:new}. 
An easy calculation shows that  $\fp^\perp=\Big\{\Big(\begin{pmatrix} E&0\\0&F\end{pmatrix},0\Big): E\in\fg_k, F\in\fg_{n-k}\Big\}.$
Evidently, $\fp^\perp$ is a Lie subalgebra of $\fg$, hence its sub-Riemannian geodesics are given by~\eqref{eq:50}. Let 
\begin{equation} \label{new}
P_{\fp}=\Big(\begin{pmatrix} 0&B\\-B^*&0\end{pmatrix},A\Big)\in\fp\quad\text{and}\quad P_{\fp^{\perp}}=\Big(\begin{pmatrix} E&0\\0&F\end{pmatrix},0\Big)\in\fp^\perp.
\end{equation}
Then 
\begin{eqnarray}\label{eq:sub_riemQG}
g(t)
&=&
g_0\, \exp(t(P_{\fp}+P_{\fp^{\perp}}))\exp(-tP_{\fp^{\perp}})\nonumber
\\
&=&
g_0\, \Big(\exp{ \Big( t\begin{pmatrix} E&B\\-B^*&F\end{pmatrix}\Big) }\begin{pmatrix}e^{-tE}&0\\0&e^{-tF}\end{pmatrix},e^{tA}\Big).
\end{eqnarray}
 If $g_0=(r,s)\in G_n\times G_k$, then 
\begin{eqnarray}\label{eq:55}
\pi(g(t))
&=&
r\Big(\exp{\Big(t\begin{pmatrix} E&B\\-B^*&F\end{pmatrix}\Big)}\begin{pmatrix} e^{-tE}&0\\0&e^{-tF}\end{pmatrix} I_{nk}e^{-tA}s^*\nonumber
\\
&=&
r\Big(\exp{ \Big( t\begin{pmatrix} E&B\\-B^*&F\end{pmatrix}\Big) }\begin{pmatrix}  e^{-tE}e^{-tA}&0\\0&e^{-tF}\end{pmatrix} I_{nk}s^*. 
\end{eqnarray}

 According to Theorem~\ref{prop:4.3}, the  geodesics on $\St_n^k(V)$ are the projections of the above curves  for which $P_{\fp}+P_{\fp^{\perp}}$ is orthogonal to $\fk$.  The orthogonal complement $\fk^\perp$ consists of matrices of the form  $\Big(\begin{pmatrix} X&Y\\-Y^*&0\end{pmatrix},-X\Big)$. So, if  $P_{\fp}$ and $P_{\fp^{\perp}}$ are as in~\eqref{new},
then $P_{\fp}+P_{\fp^{\perp}}\in\fk^\perp$ if and only if  
 $E=-A$ and $F=0$. In such a case, 
 $$
 P_{\fp}=\Big(\begin{pmatrix} 0&B\\-B^*&0\end{pmatrix},A\Big),\qquad 
 P_{\fp^{\perp}}=\Big(\begin{pmatrix} -A&0\\0&0\end{pmatrix},0\Big),
$$
and 
$$
P_{\fp}+P_{\fp^{\perp}}=\Big(\begin{pmatrix} -A&B\\-B^*&0\end{pmatrix},A\Big).
$$ 
It then follows  from ~\eqref{eq:55} that  the geodesics  at $g_0=(r,s)\in G_n\times G_k$ are of the form
\begin{equation}\label{eq:geod_qg}
m(t)=re^{t\widetilde\Omega}I_{nk}s^*,  \quad  \widetilde \Omega=\begin{pmatrix} -A&B\\-B^*&0\end{pmatrix}.
\end{equation}
At first glance, formulas~\eqref{eq:geod_qg} and~\eqref{eq:52a} give different curves. To show that it is not the case, let 
 $g=rs^*$, where, whenever convenient, we identify $s\in G_k$ with $s=\begin{pmatrix}S&0\\0&I_{n-k}\end{pmatrix}\in G_n$.  Then, 
$$
m(t)=re^{t\widetilde\Omega}I_{nk}s^*=re^{t\widetilde\Omega}s^*I_{nk}=rs^*se^{t\widetilde\Omega}s^*I_{nk}=g\, e^{t\Omega}I_{nk},
$$
where $\Omega=s \widetilde\Omega s^*=\begin{pmatrix}-SAS^*&SB\\-B^*S^*&0\end{pmatrix}$. 

We are now almost ready to relate the above formalism to  the quasi-geodesic curves. The following lemma will lead the way.

\begin{lemma}\label{lemma:relations}
Curves 
\begin{equation*}m(t)=
r\Big(\exp{t\begin{pmatrix} E&B\\-B^*&F\end{pmatrix}}\begin{pmatrix}  e^{-tE}e^{-tA}&0\\0&e^{-tF}\end{pmatrix} I_{nk}s^*
\end{equation*}
 in $\St_k^n(V)$  that are  the projections of sub-Riemannian geodesic on $G_n\times G_k$ relative to the quasi-geodesic distribution are also the projections of  horizontal curves $g(t)$  in $G_n$  that are  solutions of 
\begin{equation}\label{eq:horizontal-curve}
\frac{dg}{dt}=g(t)\begin{pmatrix}-\tilde A&e^{t\tilde A}e^{t\tilde E} \tilde Be^{-t  F}\\ -e^{t  F} \tilde B^*e^{-t\tilde E}e^{-t\tilde A}&0\end{pmatrix} 
\end{equation}
with $\tilde A=SAS^*$, $\tilde E=SES^*$,  and $\tilde B=SB$.

\end{lemma}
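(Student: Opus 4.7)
My plan is to exhibit the required horizontal curve $g(t)$ in $G_n$ explicitly and then verify \eqref{eq:horizontal-curve} by a direct logarithmic-derivative computation. Write $\Xi=\begin{pmatrix}E&B\\-B^*&F\end{pmatrix}$ and $\kappa(t)=\begin{pmatrix}e^{-tE}e^{-tA}&0\\0&e^{-tF}\end{pmatrix}$ so that the projection formula in the hypothesis becomes $m(t)=r\exp(t\Xi)\kappa(t)I_{nk}s^{*}$. Since $\kappa(t)$ is block diagonal of the right shape, $\kappa(t)I_{nk}=I_{nk}\bigl(e^{-tE}e^{-tA}\bigr)$, and under the embedding $s=\begin{pmatrix}S&0\\0&I_{n-k}\end{pmatrix}$ we have $I_{nk}s^{*}=s^{*}I_{nk}$. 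Hence $m(t)=g(t)I_{nk}$ with
$$
g(t):=r\exp(t\Xi)\kappa(t)s^{*}\in G_n.
$$
This is the natural candidate for a lift to $G_n$.

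Next I would compute $g^{-1}\dot g$. Since $\Xi$ commutes with $\exp(t\Xi)$ and the matrices $r,s$ are constant, the product rule gives
$$
g^{-1}\dot g=s\bigl[\kappa^{-1}\Xi\kappa+\kappa^{-1}\dot\kappa\bigr]s^{*}.
$$
Working block by block, and using the two basic identities $e^{tE}Ee^{-tE}=E$ and $e^{tF}Fe^{-tF}=F$, one obtains
$$
\kappa^{-1}\Xi\kappa=\begin{pmatrix}e^{tA}E e^{-tA}&e^{tA}e^{tE}Be^{-tF}\\-e^{tF}B^{*}e^{-tE}e^{-tA}&F\end{pmatrix},\qquad
\kappa^{-1}\dot\kappa=\begin{pmatrix}-e^{tA}E e^{-tA}-e^{tA}Ae^{-tA}&0\\0&-F\end{pmatrix}.
$$
The $e^{tA}Ee^{-tA}$ terms cancel in the $(1,1)$-block and the $F$'s cancel in the $(2,2)$-block, so
$$
\kappa^{-1}\Xi\kappa+\kappa^{-1}\dot\kappa=\begin{pmatrix}-e^{tA}Ae^{-tA}&e^{tA}e^{tE}Be^{-tF}\\-e^{tF}B^{*}e^{-tE}e^{-tA}&0\end{pmatrix}.
$$
In particular the $(2,2)$-block vanishes, which shows that $g(t)$ is horizontal for the reduced distribution \eqref{eq:19b}.

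Finally, I would conjugate by $s$. Using $Se^{tA}S^{*}=e^{t\tilde A}$, $Se^{tE}S^{*}=e^{t\tilde E}$, $SAS^{*}=\tilde A$, $SBS^{*}\cdot S=SB=\tilde B$ (so that $SBe^{-tF}=\tilde Be^{-tF}$), and the fact that $e^{t\tilde A}$ commutes with $\tilde A$, one checks that
$$
g^{-1}\dot g=\begin{pmatrix}-\tilde A&e^{t\tilde A}e^{t\tilde E}\tilde Be^{-tF}\\-e^{tF}\tilde B^{*}e^{-t\tilde E}e^{-t\tilde A}&0\end{pmatrix},
$$
which is exactly \eqref{eq:horizontal-curve}. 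I expect no genuine conceptual obstacle; the only thing to be careful about is purely combinatorial bookkeeping, because $E$ and $A$ need not commute, and neither does $S$ commute with either of them, so the cancellations in $\kappa^{-1}\Xi\kappa+\kappa^{-1}\dot\kappa$ and the passage to the tilded quantities have to be carried out in the correct order. Once these routine identities are handled, the lemma follows immediately from the verification above.
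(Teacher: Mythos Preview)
Your argument is correct and follows essentially the same route as the paper. The paper defines the same lift $g(t)=g_0e^{t\Phi}\Delta(t)$ with $g_0=rs^*$, $\Phi=s\,\Xi\,s^*$, $\Delta=s\,\kappa\,s^*$, and then computes $g^{-1}\dot g=\Delta^{-1}\Phi\Delta+\Delta^{-1}\dot\Delta$; the only cosmetic difference is that the paper conjugates by $s$ at the outset (so the tilded quantities appear from the start), whereas you perform the block computation first and conjugate by $s$ at the end.
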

\begin{proof}
The sub-Riemannian geodesic relative to the quasi-geodesic distribution is given by~\eqref{eq:sub_riemQG} and its projection on the Stiefel manifold is given by~\eqref{eq:55}. The latter  can be written as 
\begin{equation}\label{eq:srQG-R}
m(t)=g_0e^{t\Phi}\Delta(t)I_{nk},\quad g_0=rs^*,
\end{equation}
with
$$
e^{t\Phi}=s\exp\Big(t\begin{pmatrix}E&B\\-B^*&F\end{pmatrix}\Big)s^*=\exp\Big(t\begin{pmatrix}\tilde E&\tilde B\\-\tilde B^*& F \end{pmatrix}\Big)\quad\text{and}\quad
$$
$$
\Delta=s\begin{pmatrix}e^{-tE}e^{-tA}&0\\0&e^{-tF}\end{pmatrix}s^*=\begin{pmatrix}e^{-t\tilde E}e^{-t\tilde A}&0\\0& e^{-t  F}\end{pmatrix}.
$$
The curve $ g(t)= g_0e^{t\Phi}\Delta(t)$ is the curve on $G_n$ having the derivative
$$
\dot{ g}(t)= g_0e^{t\Phi}\Delta\Big(\Delta^{-1}\Phi\Delta+\Delta^{-1}\dot\Delta\Big)=g(t)\Big(\Delta^{-1}\Phi\Delta+\Delta^{-1}\dot\Delta\Big).
$$
  A straightforward calculation shows that 
\begin{equation*}
\Delta^{-1}\Phi\Delta+\Delta^{-1}\dot\Delta=\begin{pmatrix}-\tilde A&e^{t\tilde A}e^{t\tilde E}\tilde Be^{-t  F}\\e^{-t F}\tilde B^*e^{-t\tilde E}e^{-t\tilde A}&0\end{pmatrix}.
\end{equation*}
Therefore, $ g(t)$ is a horizontal curve in $G_n$ satisfying the conditions of the lemma.
\end{proof}

We finally come to the quasi-geodesic curves. \begin{definition}\label{def:5.1}  Quasi-geodesic curves through a point $m=rI_{nk}s^*$ in $ \St^n_k(V)$ are curves $\gamma (t)$  having the form   $\gamma (t)=r\exp(t\Psi)I_{nk}e^{-tA}s^*$ for some  matrices $\Psi=\begin{pmatrix} 0&B\\-B^*&0\end{pmatrix}$, with $B\in \mathcal M_{k(n-k)}(V)$ and  $A\in\fg_k$. 
\end{definition}

  Alternatively, quasi-geodesic curves  can be defined as  curves 
  \begin{equation*}\label{def-quasi2}
  \gamma (t)=\exp(tX)m\exp(tY),
  \end{equation*} 
  where 
$ X=r\Psi r^*$, and  $Y=-sAs^*\in\fg_k$. Indeed, 
\begin{eqnarray*}
\gamma (t)&=&r\exp(t\Psi)I_{nk}\exp(-tA)s^*=r\exp(t\Psi)r^* rI_{nk}s^*s\exp(-tA)s^*
\\
&=& \exp{(t(r\Psi r^*))}rI_{nk}s^*\exp(-tsAs^*) =\exp(tX)m\exp(tY).
\end{eqnarray*}

\begin{proposition}\label{prop:5.1} Quasi-geodesic curves  coincide with  the projections of  sub-Riemann\-ian geodesics  in~\eqref{eq:sub_riemQG} with $P_{\fp^{\perp}}=0$.  They  are curves of constant geodesic curvature. A quasi-geodesic is a Riemannian geodesic on $\St_k^n(V)$ if either $P_{\fp}=(0,A)$, or $P_{\fp}=\Big(\begin{pmatrix} 0&B\\-B^*&0\end{pmatrix},0\Big)$.
\end{proposition}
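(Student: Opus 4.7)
The plan is to split the statement into three claims and address them in order.

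For the first claim, I would substitute $P_{\fp^\perp}=0$ (i.e. $E=F=0$) into \eqref{eq:sub_riemQG} and then into the projection formula \eqref{eq:55}. This reduces $\pi(g(t))$ to $r\exp(t\Psi)I_{nk}e^{-tA}s^*$ with $\Psi=\begin{pmatrix}0&B\\-B^*&0\end{pmatrix}$, which is exactly the form in Definition~\ref{def:5.1}. The converse is obtained by the same substitution run backwards. This step is pure book-keeping.

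For the constant-geodesic-curvature assertion, my plan is to work with the reduced lift rather than the quasi-geodesic one. Since Theorem~\ref{th:isom} tells us the two homogeneous metrics on $\St^n_k(V)$ coincide, the geodesic curvature of $m(t)$ may be computed from any horizontal lift into $G_n$, and the reduced case has the crucial property $\fp^\perp=\fk$. Applying Lemma~\ref{lemma:relations} with $E=F=0$ produces the horizontal lift $g(t)\in G_n$ satisfying $\dot g=gU$ for
$$
U(t)=\begin{pmatrix}-\tilde A & e^{t\tilde A}\tilde B\\ -\tilde B^*e^{-t\tilde A}&0\end{pmatrix}\in\fp.
$$
I would then apply the covariant-derivative formula from the proof of Proposition~\ref{prop:4.4}, which in this reductive setting gives $\frac{D\dot m}{dt}=\pi_*\big(g(\dot U+\tfrac12[U,U]_\fp)\big)=\pi_*(g\,\dot U)$. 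Two short trace-cyclicity calculations then complete the argument: $\|U(t)\|^2$ collapses to $\|\tilde A\|^2+\Tr(\tilde B\tilde B^*)$, independent of $t$, so $m$ has constant speed; and $\dot U(t)$ again lies in $\fp$ with a norm that, by the same cyclicity, reduces to a $t$-independent trace in $\tilde A$ and $\tilde B$. Dividing by the constant $\|\dot m\|^2$ yields a constant geodesic curvature. The main delicate point here is the justification for using the covariant-derivative formula on a horizontal lift that is not itself a sub-Riemannian geodesic of the reduced structure; this is legitimate because the formula is a pointwise fact about horizontal lifts in a reductive homogeneous space, which is all the proof of Proposition~\ref{prop:4.4} actually uses.

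For the third claim, the two Riemannian-geodesic subcases are handled by direct substitution into \eqref{eq:geod_qg}. If $P_\fp=(0,A)$, then $B=0$ and $\gamma(t)=rI_{nk}e^{-tA}s^*$ rewrites as $r\exp\big(t\begin{pmatrix}-A&0\\0&0\end{pmatrix}\big)I_{nk}s^*$, matching \eqref{eq:geod_qg} with the off-diagonal block of $\widetilde\Omega$ zero. If $P_\fp=(\Psi,0)$, then $A=0$ and $\gamma(t)=r\exp(t\Psi)I_{nk}s^*$ matches \eqref{eq:geod_qg} with $\widetilde\Omega=\Psi$. In both cases $\gamma$ has the form \eqref{eq:geod_qg}, so it is a Riemannian geodesic on $\St^n_k(V)$.
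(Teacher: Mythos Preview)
Your approach is essentially the same as the paper's for the first two claims: both obtain the first statement by substituting $E=F=0$ into \eqref{eq:55}, and both establish constant geodesic curvature by passing to the reduced horizontal lift in $G_n$ via Lemma~\ref{lemma:relations}, then computing $\dot U$ and observing its norm is constant (the paper writes $U(t)$ explicitly as a conjugate $h(t)Ph(t)^{-1}$ with $h(t)=\begin{pmatrix}e^{t\tilde A}&0\\0&I\end{pmatrix}$ and reads off $\dot U$ as a bracket, which is exactly your trace-cyclicity argument made explicit).

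For the third claim you take a slightly different route: you match the two special cases directly against the geodesic formula \eqref{eq:geod_qg}, whereas the paper reuses the curvature computation from the second claim, noting that $\big\|\frac{D\dot m}{dt}\big\|=\big\|\begin{pmatrix}0&\tilde A\tilde B\\ \tilde B^*\tilde A&0\end{pmatrix}\big\|$ vanishes when $\tilde A=0$ or $\tilde B=0$. Your argument is more self-contained and does not depend on the curvature calculation; the paper's has the advantage of yielding (at least partially) a converse, since it displays exactly which quantity must vanish for the curve to be a geodesic. Both are valid for the ``if'' statement actually asserted.
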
 

\begin{proof} The first statement is a consequence of formula~\eqref{eq:55}. 

To show the second statement we will use Lemma ~\ref{lemma:relations}. When $P_{\fp^\perp}=0$,   $E=F=0$, and $m(t)=g(t)I_{nk}$, where $g(t)=g_0\exp({t\begin{pmatrix}0&\tilde B\\-\tilde B^*&0\end{pmatrix}})\begin{pmatrix}e^{-t\tilde A}&0\\0&I\end{pmatrix}$. Then equation ~\eqref{eq:horizontal-curve} reduces to \begin{equation*}\label{eq:horizontal-is-geodesic}
 \frac{dg}{dt}= g(t)\begin{pmatrix} -\tilde A&e^{t\tilde A}\tilde B\\-\tilde B^*e^{-t\tilde A}&0\end{pmatrix}=g(t)\begin{pmatrix}e^{t\tilde A}&0\\0&I\end{pmatrix}\begin{pmatrix}-\tilde A&\tilde B\\-\tilde B^*&0\end{pmatrix}\begin{pmatrix}e^{-t\tilde A}&0\\0&I\end{pmatrix}.
 \end{equation*}
Since $m(t)$ is the projection of a sub-Riemannian geodesic, it is parametrized by the arc length.
That  implies that $U(t)=\begin{pmatrix}e^{t\tilde A}&0\\0&I\end{pmatrix}\begin{pmatrix}-\tilde A&\tilde B\\-\tilde B^*&0\end{pmatrix}\begin{pmatrix}e^{-t\tilde A}&0\\0&I\end{pmatrix}$ is of unit length.
  
The  geodesic curvature of $m(t)$ is given by $\|\frac{D}{dt}(\frac{dm}{dt})\|$ relative to the homogeneous metric,  where $\frac{D}{dt}$ denotes the covariant derivative. An argument completely analogous to that in 
 Proposition~\ref{prop:4.4} shows that
  \begin{equation*}\label{curv}
 \Big\|\frac{D}{dt}(\frac{dm}{dt})\Big\|=\Big\|\frac{dU}{dt}\Big\|_{\fp}=\Big\|\Big[\begin{pmatrix}A&0\\0&0\end{pmatrix},\begin{pmatrix}-A&B\\-B^*&0\end{pmatrix}\Big]\Big\|=\Big\|\begin{pmatrix}0&-AB\\-B^*A&0\end{pmatrix}\Big\|
 \end{equation*}
 Evidently,  $\|\frac{D}{dt}(\frac{dm}{dt})\|=0$ if and only if either $A=0$, or $B=0$.\end{proof}

The last two statements  of Proposition \ref{prop:5.1} were  proved earlier in ~\cite{KL} by  direct computations without any recourse to Lie groups.


\subsection{The ambient (Euclidean, Hermitian, or quaternion Hermitian) metric on the Stiefel manifolds}

Each Stiefel manifold $\St_k^n(V)$ is  a closed subset of 
 the vector space  $\mathcal M_{nk}(V)$ of $n\times k$ matrices with the entries in $V$,  $V=\mathbb R^n$,  $V=\mathbb C^n$, or $V=\mathbb H^n$, endowed with  the usual quadratic form $ \langle A,B\rangle_{\mathcal M}=\Tr(A^*B)$, $A,B\in \mathcal M_{nk}(V)$.   We will  refer to $\mathcal M_{nk}(V) $  together with the metric induced  by $\langle.\,,.\rangle_\mathcal M$ as the {\it ambient manifold}. 
 
  Thus each Stiefel manifold $\St_k^n(V)$ is identified with  a closed submanifold defined by $ \{X\in \mathcal M_{nk}(V):\,  X^*X=I_n\}$  of $\mathcal M_{nk}$. And, consequently, its   tangent space $T_X\St_k^n(V)$ is identified with
$$
T_X\St_k^n(V)=\{\dot X\in \mathcal M_{n,k}: X^*\dot X=-\dot X^*X\}, \quad X\in \St_k^n(V).
$$
  We will now consider $\St_k^n(V)$ as a Riemannian manifold with the metric  given by 
\begin{equation*}\label{eq:Riem-metric}
(\dot X_1,\dot X_2)=\langle \dot X_1,\dot X_2\rangle_{\mathcal M}=\Tr(\dot X_1^*\dot X_2)
\end{equation*}
This choice of a metric will be called {\it{ambient}}.

We will now show that  the ambient metric can be lifted to a metric  $(.\,,.)_\fp$ on the space $\fp$ given by~\eqref{eq:19b}, which then induces  yet another left-invariant sub-Riemannian structure $(\mathcal H_p,(.\,,.)_\fp)$ on $G_n$. We will then extract the Riemannian geodesics relative to the ambient metric  by  analyzing the sub-Riemannian geodesics induced by $(.\,,.)_\fp$ by a procedure that is analogous to the one described in Section~\ref{sec:subRiemanian1}.

We have shown earlier that every tangent vector $\dot X$ at a point  $X$  can be lifted to a unique horizontal vector $gU$ above $X$, that is  $\dot X=gUI_{nk}$, where $U=\begin{pmatrix}A&B\\-B^*&0\end{pmatrix}$ for  suitable matrices $A$ and $B$. Now,
\begin{eqnarray*}\label{eq:induced metric}
 \Tr(\dot X_1^*\dot X_2)&=&\Tr(I_{nk}^*U_1^*g^*gU_2I_{nk})
=
\Tr( I_{nk}^*U_1^*U_2I_{nk})
\\
&=&\Tr\Big(\begin{pmatrix}A_1^*&-B_1\end{pmatrix}\begin{pmatrix} A_2&\\-B^*_2\end{pmatrix}\Big)
=
\Tr(A^*_1A_2)+\Tr(B_1B_2^*).\nonumber
\end{eqnarray*}
Thus the ambient bilinear form  lifts to
\begin{equation}\label{eq:euclidean on p}
(U_1,U_2)_{\fp}=\Tr(A_1^* A_2)+\Tr(B_1B_2^*)
\end{equation}
for $U_1=\begin{pmatrix}A_1&B_1\\-B_1^*&0\end{pmatrix}\in\fp$, and $U_2=\begin{pmatrix}A_2&B_2\\-B_2^*&0\end{pmatrix}\in\fp$.
We therefore have two bilinear  forms on $\fp$,  the trace form $\langle.\,,.\rangle_\fp$ given by ~\eqref{eq:17a} or~\eqref{eq:17b}, and the quadratic form $(.\,,.)_\fp$ given by~\eqref{eq:euclidean on p}. They are related by the formula 
\begin{equation*}\label{amb}
(U_1,U_2)_\fp=\langle U_1, DU_2+U_2D\rangle_\fp,
\end{equation*}
where $D=\begin{pmatrix} I_k&0\\0&0\end{pmatrix}$.

\subsubsection{Sub-Riemanian problem on $(G_n,\mathcal H_p,(.\,,.)_\fp)$}
We will now obtain the sub-Riemann\-ian geodesics  associated with  minimizing  $\frac{1}{2}\int_0^T(g^{-1}\frac{dg}{dt},g^{-1}\frac{dg}{dt})_\fp \,dt$ over the horizontal cur\-ves that satisfy $g(0)=g_1$ and $g(T)=g_2$. 

The Hamiltonian equations, based on the Maximum Principle,  will be obtained much in the same manner as in  Section~\ref{MP}. For the moment we assume that $V=\IR^n$ or $V=\IC^n$. We let $L$ denote the vector in $\fg$ dual to some $l\in\fg^*$ with respect to the metric $\langle .\,,.\rangle$ defined in~\eqref{eq:17a}, and write $L=L_\fp+L_\fk$ for the decomposition relative to the factors $\fp$ and $\fk$ defined in~\eqref{eq:19b} and~\eqref{eq:16b}. It follows that the regular extremals are the projections from $T^*G$ onto $G$ of the integral curves $L(t)$ of the lifted Hamiltonian
$$h_U(L)(t)=-\frac{1}{2}(U(t),U(t))_{\fp}+\langle L_\fp,U\rangle_\fp,$$
subject to the optimality condition, that the extremal control $ U(t)$ and  the associated dual vector $L(t)$  maximize $h_U(L(t))$ over all controls $U$ in $\fp$.
If $ L(t)=\begin{pmatrix} A(t)&B(t)\\-B^*(t)&C(t)\end{pmatrix}$,  and $U=\begin{pmatrix}u&v\\-v^*&0\end{pmatrix}$, then
\begin{equation*}\label{eq:25a}
h_U(L(t))=-\frac{1}{2}\Tr(u^*u)-\frac{1}{2}\Tr(vv^*)-\frac{1}{2}\Tr(A(t)u)+\frac{1}{2}\Tr(B^*(t)v+B(t)v^*).
\end{equation*}
 It follows that $h_U(L(t))$ attains the maximum relative to the control functions precisely when  $2u(t)=A(t)$, and $v(t)=B(t)$. Therefore, the extremal curves $(g(t),L(t))$ are the integral curves  of the Hamiltonian system generated by
\begin{equation}\label{eq:hamiltonianH}
H=\frac{1}{2}(U,U)_\fp=\frac{1}{4}\|A\|^2+\frac{1}{2}\Tr(BB^*),
\end{equation}
i.e., they are the solutions of the system
$
\frac{dg}{dt}=g(t)U(t)$, $\frac{dL}{dt}=[U(t),L(t)]$.
where
$dH=\begin{pmatrix} \frac{1}{2}A&B\\-B^*&0\end{pmatrix}=U$.
Hence,
$$
\begin{pmatrix} \dot {A}&\dot {B}\\-\dot {B}^*&\dot {C}\end{pmatrix}=\begin{pmatrix} 0&\frac{1}{2}AB-BC\\ \frac{1}{2}B^*A-CB^*&0\end{pmatrix}.
$$
It follows that $A$ and $C$ are constant and that $B(t)=\exp(\frac{t}{2}A)B(0)\exp(-tC)$. This yields
\begin{equation*}
U(t)
=\begin{pmatrix} \frac{1}{2}A&e^{\frac{1}{2}tA}B(0)e^{-tC})\\ -e^{tC}B^*(0)e^{-\frac{1}{2}tA}&0\end{pmatrix}
=
e^{tQ}\begin{pmatrix} \frac{1}{2}A&B(0)\\-B^*(0)&0\end{pmatrix} e^{-tQ},
\end{equation*}
where  $Q=\begin{pmatrix} \frac{1}{2}A&0\\0&C\end{pmatrix}$.

The extremal curve $g(t)$ in $G_n$  is a solution of $\frac{dg}{dt}=g(t)(e^{tQ}Pe^{-tQ})$, with $P=\begin{pmatrix} \frac{1}{2}A&B(0)\\-B^*(0)&0\end{pmatrix}$ and $Q=\begin{pmatrix} \frac{1}{2}A&0\\0&C\end{pmatrix}$. It then follows that the sub-Riemannian geodesic
\begin{equation*}\label{eq16}
g(t)=g_0e^{t(P+Q)}e^{-tQ}
\end{equation*}
projects on the Stiefel manifold as
\begin{equation}\label{eq:27a}
X(t)=\pi (g(t))=g_0\, e^{t(P+Q)}e^{-tQ}I_{nk}=g_0e^{t(P+Q)}I_{nk}e^{-\frac{t}{2}A},\quad A\in\fg_k,
\end{equation}
with $P+Q=\begin{pmatrix} A&B(0)\\-B^*(0)&C\end{pmatrix}$.  

For the Riemannian geodesics  on the Stiefel manifold we set $C=0$, because of the transversality conditions.

Let us now show that $X(t)=g_0\, e^{t(P+Q)}e^{-tQ}I_{nk}$ with the term $P+Q=\begin{pmatrix} A&B\\-B^*&0\end{pmatrix}\in\fp$ and $Q=\begin{pmatrix}\frac{A}{2}&0\\0&0\end{pmatrix}$   satisfies the Euler-Lagrange equation
\begin{equation}\label{eq:28a}
\ddot X +X\dot X^* \dot X=0\quad \Longleftrightarrow\quad \dot X=Y,\quad \dot Y=-X(Y^*Y),
\end{equation} 
 found   in~\cite{EAS} and ~\cite{FJ} 
for the case $V=\mathbb R^n$.

We have $X(t)=g(t) I_{nk}$,  $\dot X(t)=Y(t)=g(t)U(t)I_{nk}$, where
$g(t)=g_0\, e^{t(P+Q)}e^{-tQ}$, and $U(t)=e^{tQ}\begin{pmatrix} \frac{1}{2}A&B\\-B^*&0\end{pmatrix} e^{-tQ}$.
 Then,
\begin{eqnarray*}
\dot Y
&=&
g\, U^2I_{nk}+g\dot{U}I_{nk}=g\, (e^{tQ}P^2e^{-tQ})I_{nk}+g\, (e^{tQ}[P,Q]e^{-tQ})I_{nk}
\\
&=&
g\, e^{tQ}(P^2+[P,Q])e^{-tQ}I_{nk}
\\
&=&
g\begin{pmatrix}  e^{t\frac{A}{2}}(\frac{1}{4}A^2-BB^*)e^{-t\frac{A}{2}}&0\\0&0\end{pmatrix} =g\begin{pmatrix} \frac{1}{4}A^2-e^{t\frac{A}{2}}BB^*e^{-t\frac{A}{2}}&0\\0&0\end{pmatrix}.
\end{eqnarray*}
On the other hand,
\begin{eqnarray*}
X(Y^*Y)
=-gI_{nk}U^2I_{nk}
=
-g\begin{pmatrix} \frac{1}{4}A^2-e^{t\frac{A}{2}}BB^*e^{-t\frac{A}{2})}&0\\0&0\end{pmatrix}.
\end{eqnarray*}
Therefore, $X(t)$ in ~\eqref{eq:27a} satisfies the Euler-Lagrange  equation~\eqref{eq:28a} when $C=0$.

The calculations in the case $V=\IH^n$ are similar. We obtain
\begin{eqnarray*}
h_U(L(t))&=&-\frac{1}{2}\Tr(u^*u)-\frac{1}{2}\Tr(vv^*)-\frac{1}{4}\Tr(Au+(Au)^*)
\\
&+&\frac{1}{4}\Tr(Bv^*+(Bv^*)^*+B^*v+(B^*v)^*).
\end{eqnarray*}
The maximum is achieved at $2u=A$ and $v=B$ giving the Hamiltonian~\eqref{eq:hamiltonianH} for the corresponding metric. The rest of the calculations, identical to the ones above, show that the quaternionic geodesics are given by  ~\eqref{eq:27a}, with $C=0$.

Observe now that all the homogeneous metrics discussed  above coalesce into a single metric in the extreme  cases $k=n$ and $k=1$, and in both cases agree with the ambient metric. This is  obvious in the case that  $k=n$, for  then $\St_n^n(V)$ is equal to $G_n$, and the homogeneous metric is equal to the bi-invariant metric on $G_n$.  

In the case $k=1$, the Stiefel manifolds $\St_1^n(V)$ is the unit sphere,  $S^{n-1}$ in the real case, $S^{2n-1}$ in the complex case, and $S^{4n-1}$ in the quaternionic case. 
To see that the homogeneous metric coincides with the metric inherited from the ambient space $V$, note that  the matrix $\Omega=\begin{pmatrix}A&B\\-B^*&0\end{pmatrix}$ in ~\eqref{eq:52a} and~\eqref{eq:54} is equal to  $\begin{pmatrix} 0&b\\-b^*&0\end{pmatrix}$  where $b$ is a row  vector  when $k=1$. Hence 
\begin{equation*}
m(t)=\exp\Big(t\begin{pmatrix}0&b\\-b^*&0\end{pmatrix}\Big)e_1=(I\cos{\|b\|t}+\frac{1}{\|b\|}\begin{pmatrix}0&b\\-b^*&0\end{pmatrix}\sin{\|b\|t})e_1.
\end{equation*} 
Therefore, $m(t)$ is a solution of 
\begin{equation*}
\ddot m(t)+\|b\|^2m(t)=0.
\end{equation*}
It may be somewhat surprising that  in all other cases, $1<k<n$,  the metric on $\St_k^n(V)$  inherited from the ambient space $\mathcal M_{nk}(V)$ is  less natural than  the homogeneous metric on $\St_k^n(V)$ relative to the reduced action of $G_n$.



\section{Grassmann manifolds $\Gr_k^n(V)$}


 We will   now demonstrate  the relevance of the  sub-Riemannian structures on Lie groups, as described in the first part of this paper, to   the canonical Riemannian structure of the Grassmann manifolds $\Gr^n_k(V)$.  We will also make use of the fact that $\St_k^n(V)$ is a principal $G_k$ bundle over $\Gr_k^n(V)$ to examine the geometric properties of the projections   to  $\Gr_k^n(V)$ of  the   sub-Riemannian geodesics  in $\St_k^n(V)$.
 
Recall that $\Gr_k^n(V)$ is the set of all $k$-dimensional vector subspaces of an $n$-dimensional vector space  $V$. We will continue with our notations from above, with  $V$ one of $\IR^n,\IC^n$, or $\IH^n$ endowed with its usual metric,   except that for the moment  $G_n$  will denote $\Orth(n)$ in the real case, $\U(n)$ in the complex case, rather than $\SO(n)$ and $\SU(n)$ as before, while  in the quaternionic  case $G_n$ will be $\Sp(n)$, the same as before. 
  
Then $\Gr_k^n(V)$ can be embedded into $G_n$ by identifying  each vector space  $W$ in $\Gr_k^n(V)$  with the orthogonal reflection $R_W$  defined by
$$R_W(x)=
\begin{cases}
x\quad &\text{if} \quad x\in W,
\\
-x, \quad &\text{if} \quad x\in W^\perp.
\end{cases}
$$

Group $G_n$ acts on Grassmann manifolds $\Gr_k^n(V)$ under the action
$$
(\mathcal O,W)\rightarrow \mathcal OW=\{\mathcal Ow:\ w\in W\},\quad \mathcal O\in G_n.
$$
The action of $G_n$ on $\Gr_k^n(V)$ can be also expressed in terms of the reflections $R_W$ by the following:
\begin{equation}\label{eq:projRW}
(\mathcal O, R_W ) \to \mathcal OR_W \mathcal O^*,\quad \mathcal O\in G_n.
\end{equation}
It is easy to verify that this action is transitive. Therefore, $\Gr_k^n(V)$ can be realized as the quotient $G_n/K$, where 
\begin{equation}\label{eq:Knew}
K=\Big\{\begin{pmatrix} A&0\\0&C\end{pmatrix},\  A\in G_k,\ C\in G_{n-k}\Big\}\ \cong\  G_k\times G_{n-k}
\end{equation}
is the isotropy group of $R_{W_0}=\begin{pmatrix} I_k&0\\0&-I_{n-k}\end{pmatrix}$ associated with the vector space $W_0$ spanned by the standard vectors $e_1, \ldots, e_k$.

For our purposes it is desirable to work with connected Lie groups. So, from now on we assume that $G$ and $K$ are connected, that is $G$ is equal to $\SO(n)$, $\SU(n)$ or $\Sp(n)$, $K$  is modified accordingly, and the quotient $G/K$ is the oriented Grassmannians instead. 

Alternatively, the decomposition  $\fg=\fk\oplus\fp$ could have been obtained through the involutive automorphism $\sigma (g)=DgD^{-1}$ where we denote $D=\begin{pmatrix}I_k&0\\0&-I_{n-k}\end{pmatrix}$. Then $K$ is equal to the subgroup of fixed points of $\sigma$: $K=\{\sigma(g)=g:\, g\in G\}$. Note that $D$ can be also seen as the orthogonal reflexion $R_{W_0}$ across  $W_0$  the linear span of $e_1,\dots,e_k$.

In general, an involutive automorphism $\sigma\neq \Id$ on a Lie group $G$ is an automorphism that satisfies $\sigma^2=\Id$. It follows that the tangent map $\sigma_*$ at the group identity is a Lie algebra automorphism that satisfies  $\sigma_*^2=\Id$. Hence $(\sigma_*-\Id)(\sigma_*+\Id)=0$, and therefore, $\fg=\fk\oplus\fp$, where 
$$
\fp=\{A\in\fg:\ \sigma_*(A)=-A\},\quad\text{and}\quad \fk=\{A\in\fg:\ \sigma_*(A)=A\}.
$$ 
The subspaces $\fp$ and $\fk$ are orthogonal relative to the Killing form and satisfy  Cartan relations
\begin{equation*}\label{Cartan}\fg=\fp\oplus \fk,\quad
[\fp,\fk]\subseteq \fp,\quad [\fp,\fp]\subseteq\fk,\quad[\fk,\fk]\subseteq\fk.
\end{equation*}
On semisimple Lie algebras $[\fk,\fp]=\fp$, and on simple Lie algebras $[\fp,\fp]=\fk$, and therefore
$\fp+[\fp,\fp]=\fg$, see~\cite{Jc2}.

Our case here is a particular case of this general situation  since the trace form is a scalar multiple of the Killing form. Moreover, $[\fp,\fp]=\fk$, as can be easily verified.  So we are in the situation  where $\fp+[\fp,\fp]=\fg$.

 Therefore, the left-invariant distribution $\mathcal H_{\fp}$ with values in $\fp$ defines a natural sub-Riemannian problem on $G_n$: 
\begin{quotation} 
Find the sub-Riemannian geodesics on $G_n$ and identify those that project on the  Riemannian geodesics in the Grassmannian $\Gr_k^n(V)$.
\end{quotation} 

According to Theorem \ref{prop:4.3}, the sub-Riemannian geodesics are  given by
\begin{equation*}
g(t)=g_0\, e^{t(P_{\fp}+P_{\fk})}e^{-tP_{\fk}}\label{subgeo}
=g_0\exp\Big(t\begin{pmatrix} A&B\\-B^*&C\end{pmatrix}\Big)\begin{pmatrix} e^{-tA}&0\\0&e^{-tC}\end{pmatrix},
\end{equation*}
and     their  projections on $\Gr_k^n(V)$, obtained by ~\eqref{eq:projRW}, are of the form
\begin{equation}\label{subpr}
R(t)=g_0\exp\Big(t\begin{pmatrix} A&B\\-B^*&C\end{pmatrix}\Big)D\exp\Big(-t\begin{pmatrix} A&B\\-B^*&C\end{pmatrix}\Big)g_0^*,\end{equation}

Since $\fp ^\perp =\fk$, the curves in~\eqref{subpr}  have constant geodesic curvature  in $\Gr_k^n(V)$  by~Proposition  \ref{prop:4.4}.  The Riemannian geodesics on $\Gr_k^n(V)$ are given by  Corollary~\ref{cor1},
\begin{equation}\label{eq:566}
R(t)
=g_0\exp\Big(t\begin{pmatrix} 0&B\\-B^*&0\end{pmatrix}\Big)\, D\, \exp\Big(-t\begin{pmatrix} 0&B\\-B^*&0\end{pmatrix}\Big)g_0^*.
\end{equation}

Equation~\eqref{eq:566} can be expressed in the form
\begin{equation*}\label{eq:57}
R(t)=\exp{(tP)}R_0\exp{(-tP)},
\end{equation*}
where $P=g_0\begin{pmatrix}0&B\\-B^*&0\end{pmatrix}g_0^*$.  It is easy to verify that $R_0P+PR_0=0$.
The converse is also true: if $P\in \fg_n$ satisfies $R_0P+PR_0=0$, then $P=g_0\begin{pmatrix}0&B\\-B^*&0\end{pmatrix}g_0^*$.


Let us now note that the involutive automorphism $\sigma(g)=DgD^{-1}$ is an isometry for the above sub-Riemannian  structure on $G$ since 
$$
\langle \sigma_*(A),\sigma_*(B)\rangle_{\fp}=\langle-A,-B\rangle_{\fp}=\langle A,B\rangle_{\fp}.
$$ 
We will presently show that this isometry  accounts for  the geodesic symmetry of the Riemannian Grassmannian manifolds. 

To elaborate, first note that $\sigma (e^{tA})=e^{t\sigma_*(A)}$ for any $A\in\fg$.  Next, let  $F_g\colon G\rightarrow G$ be the mapping defined  by $F_g(h)=g\,\sigma(g^{-1}h)$ at each $g\in G$.  It follows that  $F_g$ is an isometry for the sub-Riemannian structure and satisfies $F_g(g)=g$.  If  
$g_0\, e^{t(P_{\fp}+P_{\fk})}e^{-tP_{\fk}}$ is a sub-Riemannian geodesic at $g_0$ then
$$F_{g_0}(g(t))=g_0\, \sigma(e^{t(P_{\fp}+P_{\fk})}e^{-tP_{\fk}})=g_0\, e^{t\sigma_*(P_{\fp}+P_{\fk})}e^{-t\sigma_*P_{\fk}})=g_0\, e^{t(-P_{\fp}+P_{\fk})}e^{-tP_{\fk}}.
$$
It follows that $F_g$ maps the sub-Riemannian geodesics at $g$ onto the  sub-Riemannian geodesics at $g$. The sub-Riemannian geodesics that project onto the Riemannian geodesics  are given by $P_\fk=0$, and we have $$F_g(g\, e^{tP_\fp})=g\, e^{-tP_\fp}.$$
 It follows that $S_{\pi (g_0)}(\pi (g))=\pi\circ F_{g_0}(g)$ is an  isometry that satisfies  
 $S_p(\gamma(t))=\gamma(-t)$  for any geodesic curve $\gamma (t)$ with $\gamma(0)=p$. 
 
 Any Riemannian space $M$ in which the map $S_p\colon \gamma(t)\mapsto\gamma(-t)$  is an isometry  for any geodesic $\gamma$ is called {\it{symmetric Riemannian space}}~\cite{Eb,Hl}. The  above  shows that the oriented Grassmannian manifolds $\Gr_k^n(V)$  with the above homogeneous metric  belongs to  the class of symmetric Riemannian spaces.

\subsection{Relation between Stiefel and Grassmann manifolds}

  Every $k$-dimensional subspace $W$ of $V$ is in one to one correspondence with the orthogonal reflection $R_W$ and the orthogonal projection  
 $\Pi_W$  defined by
$$
\Pi_W(x)=
\begin{cases}
x,\quad&\text{if}\quad x\in W
\\
0,\quad&\text{if}\quad x\in W^\perp
\end{cases}.
$$
The map
$$
W\rightarrow \Pi_W\in\{ A\in \fgl(V):\ A^*=A,\ A^2=A,\ \dim(\ker(A))=n-k\}
$$
defines a matrix representation of $\Gr_k^n(V)$ in terms of the orthogonal projections.  
The passage from  the reflections  to the  projections  is given  by a simple formula
\begin{equation*}\label{eq:20}
R_W=2\Pi_W-I.
\end{equation*}

Therefore $R_W=gR_{W_0}g^*$ corresponds to $ 2\Pi_W-I=g(2\Pi_{W_0}-I)g^*$, or $\Pi_W=g\Pi_{W_0}g^*$. In particular, $\Pi_{W_0}=\begin{pmatrix} I_k&0\\0&0\end{pmatrix}$ when $R_{W_0}=D$.
In this representation the geodesic equations ~\eqref{eq:566} become
$\Pi(t)=e^{tP}\Pi_0e^{-tP}$, where $\Pi_0P+P\Pi_0=P$.

There is a natural projection from the Stiefel manifold to the Grassmann manifold, because
every point $q = [v_1,\ldots,v_k]$ in $\St^n_k(V)$  can be projected to the vector space $W$ spanned by $v_1,\dots,v_k$.  In terms of the orthogonal projections, the projection $\Pi(q)=W$ is  given by $\Pi(q) =\sum_{i=1}^kv_i \otimes v_i^*$. When $q$ is regarded as an $n\times (n-k)$ matrix with  columns $v_1,\dots,v_k$, then  $\Pi(q) =\sum_{i=1}^kv_i \otimes v_i^*=qq^*$. This identification then yields
\begin{equation*}\label{FF^*}
\Pi(q) = qq^*\in \Gr_k^n(V), \quad q\in \St^n_k(V). 
\end{equation*}
Evidently, $\Pi^{-1}(\Pi(q)) = \{qh,\, h\in G_k\}$. Therefore, $\Pi$ is a surjection, and $\St^n_k(V)$ is a principal $G_k$ bundle over $\Gr_k^n(V)$ relative to the action of $h\in G_k$ given by $\phi(h,q)=qh$, see for instance~\cite{AutMar,Mt}. 

Let us now go back to the curves on $\St_k^n(V )$ that are the projections of various sub-Riemannian geodesics.  Equations  ~\eqref{eq:srQG-R} capture all these curves. They are of the form 
\begin{equation}\label{eq:Subproj}
q(t)=g_0e^{t\Phi}\Delta(t)I_{nk},\quad g_0=rs^*,\end{equation}
with
$$
e^{t\Phi}=s\exp\Big(t\begin{pmatrix}E&B\\-B^*&F\end{pmatrix}\Big)s^*=\exp\Big(t\begin{pmatrix}\tilde E&\tilde B\\-\tilde B^*& F\end{pmatrix}\Big),\quad\text{and}\quad
$$
$$
\Delta=s\begin{pmatrix}e^{-tE}e^{-tA}&0\\0&e^{-tF}\end{pmatrix}s^*=\begin{pmatrix}e^{-t\tilde E}e^{-t\tilde A}&0\\0& e^{-t  F}\end{pmatrix},
$$
where $\tilde A=SAS^*$, $\tilde E=SES^*$, $\tilde B=SB$, and $s=\begin{pmatrix}S&0\\0& I_{n-k}\end{pmatrix}$.

The curves in~\eqref{eq:Subproj} for arbitrary $\Phi$ correspond to the projection of sub-Riemannian curves relative to the quasi-geodesic distribution. The case $ E= F=0$ corresponds to the orthogonal distribution, and  $A=E=F=0$ corresponds to the projection of  the sub-Riemannian geodesics relative to the reduced orthogonal distribution.   
The projection of  equations (\ref{eq:Subproj}) on the Grassmannians is given by
\begin{equation*}
\Pi(q(t))=g_0\, e^{t\tilde\Phi}\begin{pmatrix}I_k&0\\0&0\end{pmatrix}e^{-t\tilde\Phi}g_0^*, \quad 
\tilde \Phi=\begin{pmatrix}\tilde E&\tilde B\\-\tilde B^*& F\end{pmatrix},
\end{equation*}
or in terms of the orthogonal reflections, by
\begin{equation}\label{subrmpr}
R(t)=g_0\, e^{t\tilde\Phi}\begin{pmatrix}I_k&0\\0&-I_{n-k}\end{pmatrix}e^{-t\tilde\Phi}g_0^*.
\end{equation}
\begin{proposition}\label{constcurv} The projections of  sub-Riemannian geodesics on $\St_k^n(V)$ project onto  curves of constant curvature in $\Gr_k^n(V)$. Their curvature is zero precisely  when $E=F=0$. In particular, the   quasi-geodesic curves project onto Riemannian geodesics in $\Gr_k^n(V)$.
\end{proposition}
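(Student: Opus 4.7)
The plan is to recognise each curve of the form~\eqref{subrmpr} as the image in $\Gr_k^n(V)$ of a sub-Riemannian geodesic in $G_n$ for the symmetric Grassmann decomposition $\fg_n = \fp\oplus\fk$, and then to invoke Proposition~\ref{prop:4.4} and Corollary~\ref{cor1}. The key observation is that the block-diagonal part
$$\tilde\Phi_\fk = \begin{pmatrix} \tilde E & 0 \\ 0 & F \end{pmatrix}\quad\text{of}\quad \tilde\Phi = \tilde\Phi_\fp + \tilde\Phi_\fk,\quad \tilde\Phi_\fp = \begin{pmatrix} 0 & \tilde B \\ -\tilde B^* & 0 \end{pmatrix},$$
commutes with $D = \begin{pmatrix}I_k & 0 \\ 0 & -I_{n-k}\end{pmatrix}$, since both are block-diagonal with respect to the same $(k,n-k)$ partition.

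This commutation lets me insert the identity $e^{-t\tilde\Phi_\fk}\, D\, e^{t\tilde\Phi_\fk} = D$ into~\eqref{subrmpr} and rewrite $R(t) = h(t)\,D\,h(t)^*$ with $h(t) = g_0\, e^{t\tilde\Phi}\,e^{-t\tilde\Phi_\fk}$. But this is exactly the template of Corollary~\ref{cor1} with $P_\fp = \tilde\Phi_\fp$ and $P_\fk = \tilde\Phi_\fk$, applied to the symmetric Grassmann decomposition in which $\fp^\perp = \fk$. Hence, up to the affine reparametrization needed to enforce $\|\tilde\Phi_\fp\|=1$, the curve $h(t)$ is a sub-Riemannian geodesic in $G_n$ for the Grassmann distribution, and $R(t)$ is its projection to $\Gr_k^n(V)$.

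Proposition~\ref{prop:4.4}, applied in the symmetric case $\fp^\perp = \fk$, immediately yields the constant geodesic-curvature statement. Corollary~\ref{cor1} further characterises the projections that are Riemannian geodesics as those with $P_\fk = 0$, i.e. $\tilde\Phi_\fk = 0$, which is equivalent to $\tilde E = 0$ and $F = 0$; since $\tilde E = S E S^*$ with $S \in G_k$ invertible, this amounts to $E = F = 0$. For the last assertion, Proposition~\ref{prop:5.1} identifies quasi-geodesics on $\St_k^n(V)$ with the projections corresponding to $P_{\fp^\perp}=0$ in the quasi-geodesic sub-Riemannian problem on $G_n\times G_k$; by~\eqref{eq:sub_riemQG} this forces exactly $E = F = 0$, so their further projections to $\Gr_k^n(V)$ are Riemannian geodesics by the criterion just established.

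The only mildly delicate step is the commutation $[\tilde\Phi_\fk, D] = 0$ together with matching $h(t)$ to the Corollary~\ref{cor1} template for the Grassmann decomposition; both facts are immediate from the block structure. Once these are in place, the three claims of the proposition reduce cleanly to Proposition~\ref{prop:4.4}, Corollary~\ref{cor1}, and Proposition~\ref{prop:5.1}, with no further computation needed.
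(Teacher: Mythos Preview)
Your proof is correct and follows essentially the same route as the paper's own argument. The paper simply observes that~\eqref{subrmpr} has the same form as~\eqref{subpr} and invokes Proposition~\ref{prop:4.4} directly; your version makes this matching explicit by decomposing $\tilde\Phi=\tilde\Phi_\fp+\tilde\Phi_\fk$, using $[\tilde\Phi_\fk,D]=0$ to rewrite $R(t)=h(t)Dh(t)^*$ with $h(t)=g_0\,e^{t\tilde\Phi}e^{-t\tilde\Phi_\fk}$ in the template of Corollary~\ref{cor1}, and then appealing to Proposition~\ref{prop:4.4}, Corollary~\ref{cor1}, and Proposition~\ref{prop:5.1} exactly as the paper does.
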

\begin{proof} Equations~\eqref{subrmpr} are of the same form as~\eqref{subpr}, and   curves in~\eqref{subpr} have constant geodesic curvature by Proposition ~\ref{prop:4.4}. These equations reduce to  the geodesics when $E=0$ and $F=0$, and that case corresponds to the quasi-geodesic curves.
\end{proof}


\begin{thebibliography}{99} 

\bibitem{AutMar}
C.~Autenried, I.~Markina, 
{\it Sub-Riemannian geometry of Stiefel manifolds.} 
SIAM J. Control Optim. {\bf 52} (2014), no. 2, 939-959.

%
\bibitem{AS}
A.~Agrachev, Y.~Sachkov,
{\it Control theory from the geometric point of view},
Encyclopedia of Mathematical Sciences, 
{\bf 87}
Springer-Verlag,
New York,
2004.


\bibitem{BosRos}
U.~Boscain, F.~Rossi,
{\it Invariant Carnot-Caratheodory metrics on $S^3$, $SO(3)$, $SL(2)$, and lens spaces.} 
SIAM J. Control Optim. {\bf 47} (2008), no. 4, 1851--1878. 

\bibitem{BalTysWar}
Z.~M.~Balogh, J.~T.~Tyson, B.~Warhurst, 
{\it Sub-Riemannian vs. Euclidean dimension comparison and fractal geometry on Carnot groups.} 
Adv. Math. {\bf 220} (2009), no. 2, 560--619.

%
%
\bibitem{Chow}
W.~L.~Chow,
{\it Uber Systeme von linearen partiellen Differentialgleichungen erster Ordnung.} 
Math. Ann. {\bf 117} (1939) 98--105.

%

\bibitem{Eb}
P. Eberlein, 
{\it Geometry of Nonpoitively Curved Manifolds Lie groups.} 
Cicago Lectures in Mathematics, The University of Chicago Press, Chicago, 1996.
%

\bibitem{EAS}
A.~Edelman, T.~A.~Arias, S.~T.~Smith,
{\it The geometry of algorithms with orthogonality constraints},
 SIAM J. Matrix Anal. Appl., {\bf 20}, no. 2 (1998) 303-353.
%
%

\bibitem{FJ}
Y.~Fedorov, B. Jovanovic,
{\it Geodesic flows and Newmann systems on Stiefel varieties. Geometry and integrability},
Math. Z.  {\bf 270}, no. 3-4, (2012) 659-698.

\bibitem{Hl}
S.~Helgason,
{\it Differential geometry, Lie groups and symmetric spaces,}
Academic Press, New York, 1978.


\bibitem{Jc2}
V.~Jurdjevic,
{\it Optimal control and geometry: integrable systems},
Cambridge University Press,
Cambridge Studies in Advanced  Mathematics, 
2016, Cambridge, UK.

\bibitem{JKL}
V.~Jurdjevic, K.~A.~Krakowski, F.~Silva~Leite,
{\it The geometry of quasi-geodesics on Stiefel manifolds}, (6 pages).
To appear in Proc. International Conference on Automatic Control and Soft Computing, June 4-6, 2018, Azores - Portugal.
%
%
\bibitem{KL}
K.~A.~Krakowski, L.~Machado, F.~Silva~Leite, J.~Batista,
{\it A modified Casteljau algorithm to solve interpolation
problems on Stiefel manifolds}, 
Journal of Computational and Applied Mathematics, Volume 311, (2017)
  84-99.
  
\bibitem{KMS93}
I.~Kol\'a\u{r}, P.~Michor, L.~Slov\'ak, 
{\it Natural operations in differential geometry.} 
Springer-Verlag, Berlin, 1993. pp. 434.
  
\bibitem{Mt}
R.~Montgomery,
{\it A tour of Subriemannian geometries, their geodesics and applications},
Amer Math. Soc., 2002,
Providence, Rhode Island.

%
%

\bibitem{Rashevsky}
P.~K.~Rashevski{\u\i}, 
{\it About connecting two points of complete nonholonomic space by admissible curve}, 
Uch. Zapiski Ped. Inst. K.~Liebknecht {\bf 2} (1938), 83--94.
%
%
\bibitem{St}
S.~Sternberg,
{\it Lectures on differential geometry},
Prentice- Hall, Inc, Englewood Cliffs, N.J.
1964.

\bibitem{Warner}
F.~Warner, 
{\it Foundations of differentiable manifolds and Lie groups.}
Graduate Texts in Mathematics, 94. Springer-Verlag, New York-Berlin, 1983. 272 pp.


\end{thebibliography}
\end{document}